\newtheorem{thm}{Theorem}
\newtheorem{cor}[thm]{Corollary}
\newtheorem{lemma}{Lemma}[section]
\newtheorem{rmk}{Remark}[section]
\newtheorem{prop}{Proposition}[section]
\begin{document}

\title{A Piecewise Smooth Fermi-Ulam Pingpong with Potential}
\author{Jing Zhou}
\address{Department of Mathematics, University of Maryland, College Park, MD 20742}
\email{jingzhou@math.umd.edu}
\date{\today}
\thanks{The author is much obliged to Dmitry Dolgopyat for proposing the problem and generously providing advice. The author also wants to thank Hong-Kun Zhang for her help and suggestions. This research was partially supported by the Patrick and Marguerite Sung Fellowship in Mathematics at University of Maryland and the NSF grant DMS 1665046.}
\maketitle

\begin{abstract}
    In this paper we study a Fermi-Ulam model where a pingpong bounces elastically against a periodically oscillating platform in a gravity field. We assume that the platform motion $f(t)$ is piecewise $C^3$ with a singularity $\dot{f}(0+)\ne\dot{f}(1-)$. If the second derivative of the platform motion is either always positive $\ddot{f}(t)>0$ or always $\ddot{f}(t)<-g$ where $g$ is the gravitational constant, then the escaping orbits constitute a null set and the system is recurrent. However, under these assumptions, escaping orbits coexist with bounded orbits at arbitrarily high energy level.
\end{abstract}

\section{Introduction}
There has been an extensive study on the Fermi-Ulam pingpong models since Fermi \cite{Fermi} and Ulam \cite{Ulam} proposed the bouncing ball mechanism as an explanation for the existence of high energy particles in the cosmic rays. The original Fermi-Ulam model describes a point particle bouncing elastically between two infinitely heavy wall, one fixed and the other oscillating periodically \cite{Ulam}. Ulam conjectured \cite{Ulam}, based on his numerical experiment with piecewise linearly oscillating wall, the existence of escaping orbits, i.e. orbits whose energy grows to infinity in time. In addition, bounded orbits (i.e. those whose energy always stays bounded) and oscillatory obits (i.e. those whose energy has a finite liminf but infinite limsup) might also exist in Fermi-Ulam models and various attempts have been made to examine the existence and prevalence of each of these three types of orbits (the author refers to \cite{Dol-FA,GRKTChaos,LiLi} for surveys).\\
Later KAM theory has negated the existence of accelerating orbits with sufficiently smooth wall motions as the prevalence of invariant curves prevents energy diffusion \cite{LaLe,Pu83,Pu94}. In nonsmooth cases, Zharnitsky \cite{Zhar} found linearly escaping orbits in a piecewise linear model. In a piecewise smooth model with one singularity de Simoi and Dolgopyat \cite{deSD} showed that there exists a parameter determining whether the linear part of the limiting system at infinity is elliptic or hyperbolic (i.e. whether the absolute value of trace is less or greater than 2), and that bounded orbits co-exist with escaping ones in elliptic regimes while escaping orbits have zero measure but full Hausdorf dimension in hyperbolic regimes.\\
When background potential is introduced, Arnold and Zharnitsky \cite{AZ} found unbounded orbits in a system with switching potentials. If the fixed wall is removed and gravity is present, Pustylnikov \cite{Pu77} showed that there exists an open set of wall motions in the space of analytic periodic functions admitting analytic extension to a fixed strip $|\Im t|<\varepsilon$ which produce infinite measure of escaping orbits. In a Duffing equation with a polynomial potential which possesses one discontinuity, Levi and You \cite{LY} proved the existence of oscillatory orbits. Ortega provided conditions for 
the existence of escaping orbits in piecewise linear oscillators \cite{Or99,Or02}. For intermediate cases where the potential takes the form $U=x^{\alpha}$ and the wall motion is sin-type, Dolgopyat \cite{Dol08} proved that the escaping orbits do not exist for $\alpha>1,\alpha\ne2$ and constitute a null set for $\alpha<1/3$, while de Simoi \cite{DS09} showed in the same setting that the escaping orbits possess full Hausdorff dimension for $\alpha<1$.\\

In this paper we study a Fermi-Ulam pingpong model with a potential. The model describes that a point mass is bouncing elastically against a moving wall in the gravity field. The motion (height) of the wall is a piecewise smooth periodic function $f(t)$ and the gravitational constant is given by $g$.

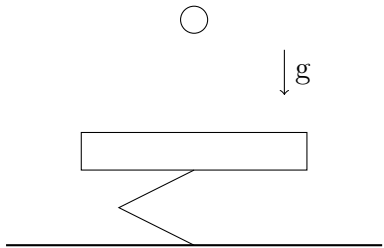
\begin{figure}[h!]
 \centering
   \begin{tikzpicture}
     \draw[thick] (-1,0) -- (4,0);
     \draw (0,1) rectangle (3,1.5);
     \draw (1.5,0) -- (0.5,0.5) -- (1.5,1);
     \draw (1.5,3) circle (0.18cm);
     \draw[->] (2.7,2.6) -- (2.7,2) node[anchor=south west] {g};
   \end{tikzpicture}
  \caption{Bouncing Pingpong in Gravity Field}\label{fig1}
\end{figure}

We are interested in the case when the motion $f(t)$ of the wall is 1-periodic and piecewise $C^3$, i.e $f(t+1)=f(t)$, $f\in C^3(0,1)$ and $\dot{f}(0+)\ne\dot{f}(1-)$. We record the time $t$ of each collision and the velocity $v$ immediately after each collision. We exclude from our discussion the singular collisions at integer times, which form a null set in the $(t,v)$-phase cylinder. We investigate the dynamics of the model by looking at the collision map $F$, which sends one collision $(t,v)$ to the next one $(\bar{t},\bar{v})$. Our main result is that if the second derivative of the wall motion behaves, i.e. the second derivative is either always positive $\ddot{f}(t)>0$ or always less than the negative of the gravitational constant $\ddot{f}(t)<-g$, then the escaping orbits have zero measure and $F$ is recurrent. 
We also show that under these assumptions, escaping and bounded orbits exist at arbitrarily high energy level.

\section{Main Results}

In this section we state the main results of this paper.

We denote the second derivative of the wall motion as $k(t) = \ddot{f}(t)$. The collision map $F$ preserves an absolutely continuous measure $\mu=wdtdv$ where $w=v-\dot{f}$ is the relative velocity after collision (c.f. Section 3.1).

For large velocities, the dynamics can be approximated by 
\[
F(t,v) = F_{\infty}(t,v) + \mathcal{O}\left(\frac{1}{v}\right)
\]
where $$F_{\infty}(t,v) = \left( t+\frac{2v}{g}, v+2\dot{f}\left(t+\frac{2v}{g}\right) \right).$$

It is easy to verify that the limit map $F_{\infty}$ is area-preserving and it covers a map $\tilde{F}_{\infty}$ on the torus $\mathbb{R}/\mathbb{Z}\times\mathbb{R}/g\mathbb{Z}$
\begin{equation*}
    \tilde{F}_{\infty} :
    \begin{cases}
    \tilde{t}_1 = \tilde{t}_0+\frac{2\tilde{v}_0}{g}\\
    \tilde{v}_1 = \tilde{v}_0+2\dot{f}(\tilde{t}_1)
    \end{cases}
\end{equation*}
where $\tilde{t} = t \mbox{ (mod 1)}$, $\tilde{v} = v \mbox{ (mod g})$.\\

If the second derivative $\ddot{f}$ of the wall motion is either always positive or always less than $-g$, then the limit map $\tilde{F}_{\infty}$ is ergodic: 

\begin{thm}\label{poserg}
Suppose that $\ddot{f}(t) > 0$ for any $t>0$. Then the map $\tilde{F}_{\infty}$ is ergodic.
\end{thm}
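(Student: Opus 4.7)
\emph{Proof proposal.} The plan is to view $\tilde F_\infty$ as a piecewise smooth, uniformly hyperbolic, area-preserving map of the torus $\mathbb{T}^2 = (\mathbb{R}/\mathbb{Z})\times(\mathbb{R}/g\mathbb{Z})$ with a singularity set consisting of a finite union of straight line segments, and to deduce ergodicity from the Pesin--Katok--Strelcyn theory of hyperbolic systems with singularities together with a Hopf chain argument. The hypothesis $\ddot f > 0$ feeds in through the Jacobian
\[
 D\tilde F_\infty(t,v) = \begin{pmatrix} 1 & 2/g \\ 2\ddot f(t_1) & 1 + 4\ddot f(t_1)/g \end{pmatrix},
\]
whose determinant is $1$ and trace is $2 + 4\ddot f(t_1)/g$. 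Piecewise $C^3$-ness lets me extend $\ddot f$ continuously to $[0,1]$, so $\ddot f \ge c_0 > 0$ uniformly; the trace then exceeds $2 + 4c_0/g$ everywhere, and $D\tilde F_\infty$ is uniformly hyperbolic on its continuity components.

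Next I would set up the invariant cone fields and analyse the singularity set. When $\ddot f > 0$ every entry of $D\tilde F_\infty$ is strictly positive, so the unstable cone $C^u = \{(a,b) : ab \ge 0\}$ is strictly $D\tilde F_\infty$-invariant with uniform expansion, and the stable cone $C^s = \{(a,b) : ab \le 0\}$ is strictly $D\tilde F_\infty^{-1}$-invariant with uniform contraction. The discontinuity locus is $\mathcal{S} = \{(t,v) : t + 2v/g \in \mathbb{Z}\}$, a finite union of parallel segments of slope $-g/2$, hence uniformly transverse to both cones. Together with the obvious boundedness of $\log \|D\tilde F_\infty^{\pm 1}\|$ away from $\mathcal S$, these properties place us in the Katok--Strelcyn framework, so at Lebesgue-almost every point there exist local stable and unstable manifolds of positive size and the two foliations are absolutely continuous.

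Ergodicity then follows from the Hopf argument: Birkhoff averages of continuous observables are constant along local unstable and stable leaves, and absolute continuity transports this constancy between neighbouring leaves. Because the unstable slope stays in a compact subinterval of $(0,\infty)$ strictly separated from the singular slope $-g/2$, iteration stretches each local unstable leaf without flattening it, so unstable leaves eventually wind densely in $\mathbb{T}^2$; the symmetric argument handles stable leaves under backward iteration, and Hopf chains will connect almost every pair of points. The principal difficulty I anticipate is precisely this last step: one has to control the successive cuts of the stable and unstable leaves by $\bigcup_{n \ge 0} \tilde F_\infty^{\pm n}(\mathcal S)$ and show that on a full-measure set the surviving local manifolds are long enough to form Hopf chains. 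The uniform transversality between $\mathcal S$ and the hyperbolic cones, combined with the fact that $f$ has only one singular time per period, is what should yield the necessary quantitative bounds (in the spirit of Katok--Strelcyn, Liverani--Wojtkowski, and Chernov).
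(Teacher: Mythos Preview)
Your proposal follows essentially the same route as the paper: positive cone $\mathcal C^u=\{\delta t\,\delta v\ge 0\}$, its complement $\mathcal C^s$, uniform hyperbolicity from the positivity of $D\tilde F_\infty$ when $\ddot f>0$, and a local-ergodicity-plus-Hopf-chain scheme in the Katok--Strelcyn/Liverani--Wojtkowski tradition. The paper in fact invokes the Liverani--Wojtkowski local ergodicity theorem as a black box and checks its hypotheses (strict cone invariance, noncontraction, Sinai--Chernov ansatz, proper alignment and regularity of the singularity sets), which is exactly the packaged form of the argument you are sketching by hand.

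Two points in your write-up need tightening. First, the forward singularity set $\mathcal S^+=\{t+2v/g\equiv 0\}$ has slope $-g/2$, so its tangent lies \emph{inside} the stable cone $\mathcal C^s$, not transverse to it; likewise the backward singularity $\mathcal S^-$ has positive slope and sits inside $\mathcal C^u$. This ``proper alignment'' (forward singularities stable-like, backward singularities unstable-like) is precisely the geometric input that controls how iterated singularity curves cut unstable manifolds, and it is the content of the condition you allude to at the end; calling it transversality to both cones misstates the mechanism. Second, your passage from local to global ergodicity via ``leaves wind densely'' is too loose. The paper argues instead that any nontrivial ergodic component would have boundary contained in some $\mathcal S_N^+=\bigcup_{n=0}^{N-1}\tilde F_\infty^{-n}\mathcal S^+$, and then uses invariance of the boundary together with $\tilde F_\infty(\mathcal S_N^+)=\mathcal S_{N-1}^+\cup\mathcal S_0$ to derive a contradiction with the minimality of $N$. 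You should replace the density heuristic with this (or an equivalent) finite-step argument.
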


\begin{thm}\label{negerg}
Suppose that for any $t>0$, $\ddot{f}(t) <-g$ where $g$ is the gravitational constant. Then the map $\tilde{F}_{\infty}$ is ergodic.
\end{thm}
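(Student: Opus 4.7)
The plan is to establish uniform hyperbolicity of $\tilde{F}_\infty$ and then apply a Hopf-type ergodicity argument adapted to piecewise smooth area-preserving maps with singularities. The argument should run in close parallel to that of Theorem \ref{poserg}; essentially only the sign of the trace changes.

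First I would compute
\[
D\tilde{F}_\infty(\tilde{t}_0,\tilde{v}_0)=\begin{pmatrix}1 & 2/g\\ 2\ddot{f}(\tilde{t}_1) & 1+4\ddot{f}(\tilde{t}_1)/g\end{pmatrix},
\]
noting that $\det D\tilde{F}_\infty=1$ and $\tr D\tilde{F}_\infty=2+4\ddot{f}(\tilde{t}_1)/g$, so the hypothesis $\ddot{f}<-g$ forces $\tr D\tilde{F}_\infty<-2$ uniformly. The eigenvalues are therefore real and negative, with one of absolute value uniformly greater than $1$, giving uniform hyperbolicity. Because the eigenvalues are negative, it is convenient to pass to the iterate $\tilde{F}_\infty^{2}$, whose derivative has positive eigenvalues and admits a standard strictly invariant cone field built around its unstable eigendirection.

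Next I would identify the singularity set $S=\{(\tilde{t}_0,\tilde{v}_0):\tilde{t}_0+2\tilde{v}_0/g\in\mathbb{Z}\}$, which consists of finitely many smooth closed curves on the torus $\mathbb{R}/\mathbb{Z}\times\mathbb{R}/g\mathbb{Z}$, inherited from the jump $\dot{f}(0+)\ne\dot{f}(1-)$. I would verify that these curves are uniformly transverse to the invariant cone field and satisfy the Katok--Strelcyn regularity/integrability hypotheses. With these ingredients in place, the Pesin--Katok--Strelcyn local ergodicity theorem (or its Liverani--Wojtkowski refinement) yields that almost every point possesses local stable and unstable manifolds of positive length, and that local ergodic components exist; the connectedness of the torus together with absolute continuity of the stable and unstable holonomies then glues these components into a single ergodic component of full measure.

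The main obstacle is the quantitative estimate required at the last step: one must show that the set of points whose local stable manifold has length less than $\varepsilon$ has measure $O(\varepsilon^{\alpha})$ for some $\alpha>0$. This reduces to a careful bookkeeping of the complexity of forward and backward iterates of $S$ together with their transversality angles to the cone field, and it is essentially the same estimate required for Theorem \ref{poserg}. The sign reversal of the eigenvalues enters only through the passage to $\tilde{F}_\infty^{2}$ and does not affect the substance of this analysis.
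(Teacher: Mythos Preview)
Your overall strategy---verify the Liverani--Wojtkowski hypotheses and then pass from local to global ergodicity---matches the paper's. The substantive divergence is your detour through $\tilde{F}_\infty^{2}$. The paper works directly with $\tilde{F}_\infty$: a cone is symmetric about the origin, so negative eigenvalues do not obstruct strict invariance, and the paper exhibits an explicit pair
\[
\mathcal{C}^u(\tilde{t}_0,\tilde{v}_0)=\Bigl\{\frac{\delta v}{\delta t}\le k_0\Bigr\},\qquad
\mathcal{C}^s(\tilde{t}_0,\tilde{v}_0)=\Bigl\{\frac{\delta v}{\delta t}\ge k_0\Bigr\},
\]
with $k_0=\ddot{f}(\tilde{t}_0)$. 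The half-plane boundary has \emph{point-dependent} slope, and strict invariance is checked by a short direct calculation that uses $\ddot{f}<-g$ twice. Staying with $\tilde{F}_\infty$ keeps the singularity set at $\mathcal{S}^\pm$ rather than the larger $\mathcal{S}_2^\pm$, which makes proper alignment and the expansion estimate immediate; the paper then changes to the basis $((0,1),(1,k_0))$ to compute the least expansion coefficient $\sigma(d\tilde{F}_\infty)>1$. Your route via the square would work in principle but forces you to track a composed derivative and a doubled singularity set with no payoff.

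Your ``main obstacle'' paragraph describes a Katok--Strelcyn-style measure estimate on short stable manifolds, but the paper does not need this: the Liverani--Wojtkowski theorem requires only noncontraction and the Sinai--Chernov Ansatz (that $\sigma(d_p\tilde{F}_\infty^n)\to\infty$ for a.e.\ $p\in\mathcal{S}^-$), and both follow at once from the uniform bound $\sigma(d\tilde{F}_\infty)>1$ together with supermultiplicativity of $\sigma$. No bookkeeping of iterated-singularity complexity is needed for the ergodicity argument itself.
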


We shall call such wall motions \emph{admissible} if either for all $t$, $\ddot{f}>0$ or for all $t$, $\ddot{f}<-g$.

\begin{rmk}
  The map $\tilde{F}_{\infty}$ might not be ergodic if the assumptions in Theorem \ref{poserg} and Theorem \ref{negerg} fail. For example, when the wall motion is analytic 
  Pustylnikov \cite{Pu77} found an KAM island for the limit map for an open set of analytic periodic wall motions which admit analytic extension to a strip $|\Im t|<\varepsilon$. Note that for analytic
  motions $\int_0^1 \ddot{f}(t) dt=0$ so analytic motions are not admissible.
\end{rmk}

Besides the above ergodic properties, we also obtain stronger statistical properties of $\tilde{F}_{\infty}$ under the same assumptions.

For every $x,y \in \mathbb{T}$, we define their \textit{forward separation time} $s_+(x,y)$ to be the smallest nonnegative integer $n$ such that $x,y$ belongs to distinct continuity components of $\tilde{F}_{\infty}^n$. We can define similarly their \textit{backward separation time} $s_-(x,y)$ for the inverse iterates.\\
A function $\varphi:\mathbb{T} \to \mathbb{R}$ is said to be \textit{dynamically H\"older continuous} if there exist $\vartheta = \vartheta(\tilde{F}_{\infty}) \le 1$ such that
\[
|\varphi|_{\vartheta}^+ := \sup \left\{ \frac{|\varphi(x) - \varphi(y)|}{\vartheta^{s_+(x,y)}}: \hbox{$x \ne y$ on the same unstable manifold} \right\} <\infty,
\]
and that
\[
|\varphi|_{\vartheta}^- := \sup \left\{ \frac{\varphi(x) - \varphi(y)|}{\vartheta^{s_-(x,y)}}: \hbox{$x \ne y$ on the same stable manifold} \right\} <\infty .
\]
\begin{thm}[Exponential Decay of Correlations]\label{exp}
Suppose that the wall motion is admissible. Then the map $\tilde{F}_{\infty}$ enjoys exponential decay of correlations for dynamically H\"older continuous observables: $\exists \ b>0$ such that for any pair of dynamically H\"older continuous observables $\varphi, \phi$, $\exists \ C_{\varphi, \phi}$ such that 
\[
\bigg| \int_{\mathbb{T}} (\varphi \circ  \tilde{F}_{\infty}^n) \phi d\tilde{\mu} - \int_{\mathbb{T}}\varphi d\tilde{\mu}  \int_{\mathbb{T}} \phi d\tilde{\mu} \bigg| 
\le C_{\varphi, \phi} e^{-bn}, \ \ n\in\mathbb{N}.
\]
\end{thm}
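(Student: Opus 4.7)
The plan is to verify that $\tilde{F}_{\infty}$ fits into the Young-tower framework for piecewise hyperbolic area-preserving maps with singularities, after which the general exponential decay theorem applies. I begin by computing
\[
D\tilde{F}_{\infty}(t,v)=\begin{pmatrix} 1 & 2/g \\ 2\ddot{f}(\tilde{t}_1) & 1+4\ddot{f}(\tilde{t}_1)/g \end{pmatrix},
\]
so that $\det D\tilde{F}_{\infty}=1$ and $\tr D\tilde{F}_{\infty}=2+4\ddot{f}(\tilde{t}_1)/g$. Admissibility forces $|\tr D\tilde{F}_{\infty}|>2$ uniformly, so the two real eigenvalues are bounded away from the unit circle. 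Standard cone arguments then produce continuous invariant cone fields $C^u,C^s$ with slopes bounded away from $0$ and $\infty$, and a uniform hyperbolicity rate $\lambda>1$.

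Next I analyze the singularity structure. Since $\dot{f}$ has a single jump at integer times, the forward discontinuity set $\mathcal{S}_+$ is the closed curve $\{\tilde{t}_0+2\tilde{v}_0/g\equiv 0\bmod 1\}$ on $\mathbb{T}^2$, of slope $-g/2$, while the backward discontinuity set $\mathcal{S}_-$ is the vertical circle $\{\tilde{t}=0\}$. Using the explicit eigenvectors of $D\tilde{F}_{\infty}$ one checks that neither curve is parallel to $C^u$ or $C^s$, so the singularities are uniformly transverse to the invariant cones. On each component of $\mathbb{T}^2\setminus\mathcal{S}_+$ the map is $C^2$, yielding bounded distortion on iterated unstable curves.

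The main technical step is a Chernov-type growth lemma: for any sufficiently short unstable curve $W$, the connected components $W_i$ of $\tilde{F}_{\infty}^{n_0}(W)$ should satisfy $\sum_i \Lambda_i^{-1}<1$ for some fixed $n_0$, where $\Lambda_i \geq \lambda^{n_0}$ is the minimum expansion along $W_i$. Because $\mathcal{S}_+$ has only finitely many smooth branches and each iteration cuts a unit-length unstable curve into a uniformly bounded number of pieces, while the expansion is at least $\lambda^{n_0}$, picking $n_0$ large enough makes the inequality hold. This is the most delicate point: one has to keep track of how a single transverse singularity curve can slice a long, folded image of $W$ multiple times; nonetheless, the uniformity of $\lambda$ together with the rigid geometry of $\mathcal{S}_+$ makes the estimate tractable.

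Granted the growth lemma, the distortion bound, and transverse singularities, the construction of a hyperbolic product set $\Lambda\subset\mathbb{T}^2$ with exponentially decaying return-time tails proceeds by the Chernov--Markarian coupling / Young-tower scheme. Invoking Young's theorem on exponential decay of correlations for such towers produces constants $b>0$ and $\vartheta<1$ depending only on $\tilde{F}_{\infty}$, such that any pair of dynamically H\"older observables $\varphi,\phi$ with finite $|\cdot|_{\vartheta}^{\pm}$ seminorms satisfies the asserted bound, with $\vartheta$ identified with the contraction ratio of the tower. The principal obstacle remains the growth lemma; once it is in place, the abstract machinery takes over.
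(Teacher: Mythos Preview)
Your overall strategy---verify the Chernov--Zhang/Young-tower hypotheses and then invoke the general exponential decay theorem---matches the paper's approach. However, there is a genuine gap in the step you yourself flag as ``the most delicate point.''

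You argue that since each application of $\tilde{F}_{\infty}$ cuts a short unstable curve into a uniformly bounded number $K$ of pieces, and the expansion over $n_0$ steps is at least $\lambda^{n_0}$, choosing $n_0$ large makes $\sum_i \Lambda_i^{-1}<1$. But this reasoning is incorrect: after $n_0$ iterates the number of components can be as large as $K^{n_0}$, so the bound you get is $\sum_i \Lambda_i^{-1}\le K^{n_0}\lambda^{-n_0}$, which is $<1$ only if $K<\lambda$. There is no reason for that to hold here; the singularity set $\mathcal{S}^+$ on the torus consists of two parallel segments, so $K=3$, while $\lambda$ can be arbitrarily close to $1$ depending on $k_{\min}$. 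What the paper actually proves is a \emph{linear} complexity bound: at any multiple point $z$ of $\mathcal{S}_n^+$, the number of sectors is at most $6n$. The argument is geometric---one tracks ``active'' versus ``inactive'' quadrants and shows that when a sector is split by $\mathcal{S}^+$, at most one of the images remains active under $\tilde{F}_{\infty}$ (this uses the sign of $\ddot f$ explicitly). Only with this subexponential complexity does $6N_0/\lambda^{N_0}<1$ eventually hold, giving the $N_0$-step expansion.

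Two smaller points. First, your identification of $\mathcal{S}_-$ as the vertical circle $\{\tilde t=0\}$ is wrong: that set is $\mathcal{S}_0$, while $\mathcal{S}^-=\{\tilde t_{-1}=0\}=\{\tilde t_0+\tfrac{4}{g}\dot f(\tilde t_0)-\tfrac{2}{g}\tilde v_0\equiv 0\}$, whose tangent has slope $\tfrac{g}{2}(1+4k_0/g)$; the proper alignment check depends on this. Second, since only $N_0$-step (not one-step) expansion is available, the Chernov--Zhang theorem yields exponential decay for $\tilde{F}_{\infty}^{N_0}$, and a short additional argument is needed to pass from $\tilde{F}_{\infty}^{N_0}$ back to $\tilde{F}_{\infty}$ by writing $n=pN_0+q$ and checking that $\varphi\circ\tilde{F}_{\infty}^q$ remains dynamically H\"older.
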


We observe that for any dynamically H\"older observable $\varphi$, the following quantity is finite due to Theorem \ref{exp}
$$\sigma_{\varphi}^2:=\sum_{n=-\infty}^{\infty}\int_{\mathbb{T}}\varphi\cdot(\varphi\circ\tilde{F}_{\infty}^n) d\tilde{\mu}<\infty.$$
\begin{thm}[CLT]\label{clt}
Suppose that the wall motion is admissible. Then the map $\tilde{F}_{\infty}$ satisfies central limit theorem for dynamically H\"older observables, i.e. 
$$\frac{1}{\sqrt{n}}\sum_{i=0}^{n-1}\varphi\circ\tilde{F}_{\infty}^i \stackrel{dist}{\rightharpoonup}\mathcal{N}(0,\sigma_{\varphi}^2)$$
where $\varphi$ is dynamically H\"older with zero average $\int_{\mathbb{T}}\varphi d\tilde{\mu}=0$.
\end{thm}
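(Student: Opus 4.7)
The plan is to deduce the central limit theorem from the exponential decay of correlations (Theorem \ref{exp}) via Gordin's martingale approximation method, as adapted to hyperbolic systems with singularities by Chernov and Markarian. The strategy is to decompose the Birkhoff sum into a martingale part and a telescoping coboundary, apply the classical martingale CLT to the former, and absorb the latter after dividing by $\sqrt{n}$.

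Concretely, let $\zeta$ be a measurable partition of $\mathbb{T}$ into pieces of local stable manifolds of $\tilde{F}_{\infty}$; forward iterates refine $\zeta$ because stable manifolds contract under $\tilde{F}_{\infty}$, giving an increasing filtration $\mathcal{F}_n := \tilde{F}_{\infty}^{\,n}\,\sigma(\zeta)$. For a dynamically H\"older observable $\varphi$ with $\int\varphi\,d\tilde{\mu}=0$, Theorem \ref{exp} combined with the fact that dynamically H\"older functions are $\vartheta^{\,n}$-close to their conditional expectations along stable leaves yields the key bound $\|E(\varphi\mid\mathcal{F}_{-k})\|_2 = O(\vartheta^{\,k})$. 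One then sets
\[
h := \sum_{k=1}^{\infty} E(\varphi\mid\mathcal{F}_{-k}), \qquad \psi := \varphi + h\circ\tilde{F}_{\infty} - h,
\]
so that the series converges in $L^2$ by the preceding bound and a direct computation gives $E(\psi\mid\mathcal{F}_{-1})=0$. Hence $\{\psi\circ\tilde{F}_{\infty}^{\,i}\}_{i\ge 0}$ is a stationary ergodic martingale difference sequence, and Billingsley's CLT produces a Gaussian limit with variance $\|\psi\|_2^{\,2}$. The coboundary $\frac{1}{\sqrt{n}}(h\circ\tilde{F}_{\infty}^{\,n}-h)$ vanishes in probability because $h\in L^2$, and the identification $\|\psi\|_2^{\,2}=\sigma_{\varphi}^{\,2}$ follows from $\sigma_{\varphi}^{\,2}=\lim_{n}\tfrac{1}{n}\|S_n(\varphi)\|_2^{\,2}$, orthogonality of martingale increments, and $L^2$-boundedness of the coboundary.

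The main obstacle I anticipate is verifying the quantitative conditional-expectation estimate $\|E(\varphi\mid\mathcal{F}_{-k})\|_2 = O(\vartheta^{\,k})$ in the appropriate regularity class, rather than a merely qualitative $o(1)$ decay. In the presence of singularities the stable partition has leaves of varying and unbounded complexity, and $\mathcal{F}_{-k}$ corresponds to averaging over $k$-step backward images of stable manifolds, which get chopped by iterates of the singularity set. Controlling this cutting so that the exponential rate survives relies on the same growth lemma and distortion estimates that feed into the proof of Theorem \ref{exp}; once those are in place, the martingale approximation proceeds in the standard Chernov--Markarian fashion and the CLT follows by a routine application of the stationary ergodic martingale central limit theorem.
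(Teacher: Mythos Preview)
Your approach is viable but takes a genuinely different route from the paper. The paper does \emph{not} derive the CLT from the exponential decay of correlations; instead it verifies the five structural hypotheses of Chernov and Chernov--Zhang (uniform hyperbolicity, singularity structure, regularity of unstable curves, SRB/mixing, one-step expansion) and invokes their abstract theorem (Theorem~\ref{thmcz}), which delivers exponential decay \emph{and} the CLT simultaneously for $\tilde{F}_{\infty}^{N_0}$. The only additional step is the (brief) observation that the CLT for $\tilde{F}_{\infty}^{N_0}$ implies the CLT for $\tilde{F}_{\infty}$. So the paper's proof of Theorem~\ref{clt} is essentially free once Theorem~\ref{exp} is proven, because both conclusions come from the same black box. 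Your route---Gordin-type martingale approximation from the decay of correlations---is a legitimate and well-known alternative (indeed Chernov--Markarian carry it out for billiards), and it has the merit of being more transparent about \emph{why} mixing implies Gaussian fluctuations; the paper's route has the merit of requiring no further argument beyond what was already done for Theorem~\ref{exp}.

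Two technical points on your outline deserve care. First, the bound $\|E(\varphi\mid\mathcal{F}_{-k})\|_2=O(\vartheta^k)$ does \emph{not} follow directly from Theorem~\ref{exp} as stated, because $E(\varphi\mid\mathcal{F}_{-k})$ is not a priori dynamically H\"older, so you cannot plug it into the correlation bound as a test function; you correctly flag this and note that it ultimately rests on the same distortion and growth-lemma inputs used to prove Theorem~\ref{exp}, so you are effectively re-entering the Chernov machinery rather than using Theorem~\ref{exp} as a pure black box. Second, your explicit formula $h=\sum_{k\ge1}E(\varphi\mid\mathcal{F}_{-k})$ with $\psi=\varphi+h\circ\tilde{F}_{\infty}-h$ does not quite yield $E(\psi\mid\mathcal{F}_{-1})=0$ as written (a direct computation using $E(g\circ\tilde{F}_{\infty}\mid\tilde{F}_{\infty}^{-1}\mathcal{A})=E(g\mid\mathcal{A})\circ\tilde{F}_{\infty}$ shows the terms do not telescope); the standard construction uses either the transfer operator $h=\sum_{k\ge1}P^k\varphi$ or the Gordin sum $\sum_{k\ge0}\big(E(\varphi\circ\tilde{F}_{\infty}^k\mid\mathcal{F}_0)-E(\varphi\circ\tilde{F}_{\infty}^k\mid\mathcal{F}_{-1})\big)$, and also requires first projecting $\varphi$ onto $\mathcal{F}_0$ since a dynamically H\"older observable is not constant on stable leaves. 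These are fixable details, but they are exactly where the work lies in your approach, whereas the paper avoids them entirely by citing \cite{Ch}.
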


As for the original system, under the assumptions in Theorem \ref{poserg} or Theorem \ref{negerg} the escaping orbit of the collision map $F$ constitute a null set:
\begin{thm}[Null Escaping Set]\label{null}
Suppose that the wall motion is admissible. Then the set $E$ of escaping orbits of $F$ has zero measure.
\end{thm}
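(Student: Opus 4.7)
My overall strategy is to deduce the null-escaping property from conservativity of $F$ on the $\sigma$-finite space $(X,\mu)$. Once conservativity is established, the Poincar\'e--Halmos recurrence theorem implies that every finite-measure set is recurrent, so $\mu$-a.e.\ orbit in $A_N:=\{v\le N\}$ returns to $A_N$ infinitely often. Since an escaping orbit eventually leaves $A_N$ forever, $\mu(E\cap A_N)=0$ for every $N$, and $\sigma$-additivity gives $\mu(E)=0$. The burden is therefore to prove conservativity, with the nontrivial regime being $v$ large.

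\textbf{An almost-invariant action.} To control the high-energy dynamics I would first solve the cohomological equation associated with $\tilde{F}_\infty$ and extract a quasi-conserved quantity for $F$. Because $f$ is $1$-periodic, a direct computation gives $\int_\mathbb{T}\dot f(\tilde t_1)\,d\tilde\mu=0$; combined with exponential decay of correlations on dynamically H\"older observables (Theorem \ref{exp}), which implies a spectral gap for the transfer operator of $\tilde{F}_\infty$, the equation
\[
\psi\circ\tilde{F}_\infty-\psi=-\dot f(\tilde t_1)
\]
admits a dynamically H\"older solution $\psi:\mathbb{T}\to\mathbb{R}$. Lifting $\psi$ to the cylinder via $(t,v)\mapsto(\tilde t,\tilde v)$ and setting $J(t,v)=v/2+\psi(\tilde t,\tilde v)$, one obtains $J\circ F_\infty\equiv J$, and the approximation $F=F_\infty+\mathcal{O}(1/v)$ gives the near-invariance $|J\circ F-J|\le C/v$.

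\textbf{Recurrence at high energy.} Telescoping $J\circ F - J$ along an orbit $(t_n,v_n)$ confined to $\{v\ge V\}$ produces
\[
|v_n-v_0|\le 2\|\psi\|_\infty+C\sum_{k<n}v_k^{-1},
\]
so the velocity can change by order $V$ only on the timescale $n\gtrsim V^2$. To upgrade this to true recurrence of $v_n$ to a starting strip $\{V\le v\le V+g\}$, I would use the CLT (Theorem \ref{clt}) for the coboundary increments $\psi\circ\tilde{F}_\infty-\psi$ evaluated along a near-$\tilde{F}_\infty$-orbit, which gives the standard $\sqrt n$-fluctuation behavior of a zero-mean random walk in one dimension---hence recurrent. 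Together with elementary recurrence on compact low-energy regions this yields conservativity of $F$, and the Poincar\'e--Halmos reduction closes the argument.

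\textbf{Main obstacle.} The critical technical difficulty is a delicate scale-matching: the natural random-walk timescale on which $v_n$ can fluctuate by order $V$ is $n\sim V^2$, but on this horizon the accumulated $\mathcal{O}(1/v)$ perturbation is also of order $V$. The two effects are balanced at the very scale where recurrence must be proved. Overcoming this requires more than naive telescoping; the argument must extract cancellation from the perturbation using the mixing properties of Theorem \ref{exp}, possibly through a two-scale decomposition into shorter windows $n\sim V^{2-\varepsilon}$ (where the error is genuinely negligible) chained via the exponential mixing of $\tilde{F}_\infty$ to cover the full $V^2$ horizon, and choosing $\psi$ in a function class strong enough to bound its variation along $F$- rather than $\tilde{F}_\infty$-orbits.
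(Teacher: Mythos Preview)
Your approach has a genuine gap at its first substantive step. You claim that exponential decay of correlations (Theorem~\ref{exp}) yields a spectral gap allowing you to solve the cohomological equation $\psi\circ\tilde F_\infty-\psi=-\dot f(\tilde t_1)$ in the dynamically H\"older class. This inference is false: a spectral gap for the transfer operator gives rapid decorrelation, but it does \emph{not} place a zero-mean observable in the range of $I-U$. A bounded solution $\psi$ exists iff the Birkhoff sums of $\dot f(\tilde t_1)$ stay bounded, equivalently iff the CLT variance $\sigma^2$ of that observable vanishes. The paper's Theorem~\ref{espbdd} (existence of escaping orbits) is proved precisely by exploiting a \emph{non}-degenerate CLT for the velocity increment, so for admissible motions one expects $\sigma^2>0$, hence no such $\psi$, and your quasi-invariant $J$ is undefined. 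There is also an internal inconsistency: you later invoke ``$\sqrt n$-fluctuation behaviour'' for the coboundary increments $\psi\circ\tilde F_\infty-\psi$, but a coboundary telescopes to $\psi\circ\tilde F_\infty^{\,n}-\psi=O(1)$; you cannot have both a bounded $\psi$ and a nontrivial CLT for the same observable.

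The paper's route is entirely different and much shorter. It invokes a black-box result of de~Simoi--Dolgopyat (Lemma~\ref{lemmadsd}): if $\tilde F_\infty$ is ergodic and the integer drift $\gamma$ has zero mean, then the escaping set of $F$ is null. Ergodicity is Theorems~\ref{poserg}--\ref{negerg}, and $\int_{\mathbb T}\gamma\,d\tilde\mu=0$ is a two-line computation using $\tilde F_\infty$-invariance of $\tilde\mu$ and periodicity of $f$. Note also that the logical direction is the reverse of yours: the paper obtains the null escaping set first and \emph{deduces} recurrence (Corollary~\ref{rec}) from it by identifying $E$ with the dissipative part, whereas you attempt to establish conservativity as an input.
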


It turns out that the escaping set is exactly the dissipative part of the system and consequently under the assumptions in Theorem \ref{poserg} or \ref{negerg} the system is recurrent:
\begin{cor}[Recurrence]\label{rec}
Suppose that the wall motion is admissible. Then $F$ is recurrent, i.e. almost every orbit comes arbitrarily close to its initial point.
\end{cor}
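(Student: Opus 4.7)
The plan is to leverage the Hopf decomposition of the $\sigma$-finite $F$-invariant measure $\mu = w\, dt\, dv$ and identify the dissipative part with (a subset of) the escaping set $E$, which Theorem~\ref{null} tells us is $\mu$-negligible under the admissibility hypothesis. Writing $X$ for the physical phase cylinder $\{(t,v): v > \dot f(t)\}$, the Hopf decomposition gives a (mod-null) splitting $X = C \sqcup D$ into conservative and dissipative parts, characterized by the property that on $D$ the Birkhoff sums $\sum_{n \ge 0} \mathbf{1}_A(F^n x)$ are a.e.\ finite for every measurable $A$ of finite $\mu$-measure.

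First I would verify that each velocity-sublevel set $K_M = \{v \le M\}$ has finite $\mu$-measure; a direct computation yields $\mu(K_M) = \tfrac12 \int_0^1 (M - \dot f(t))^2 \, dt < \infty$, using only the boundedness of $\dot f$ on the fundamental domain. Applying the dissipative-side characterization simultaneously to a sequence $M_j \to \infty$, for a.e.\ $x \in D$ the orbit visits each $K_{M_j}$ only finitely often, hence $v_n(x) \to \infty$. This shows $D \subset E$ modulo a $\mu$-null set, and combined with Theorem~\ref{null} forces $\mu(D) = 0$; in other words, $F$ is conservative.

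The last step is to convert conservativity into topological recurrence via a standard countable-basis argument. I would pick a countable basis $\{B_k\}$ of the topology of $X$ consisting of bounded open sets of finite $\mu$-measure (e.g.\ intersections of rational balls with $X$). Halmos' conservative recurrence theorem then implies that, for each $k$, the set of $x \in B_k$ that fail to return to $B_k$ infinitely often is $\mu$-null; taking the union of these exceptional sets over $k$ leaves a full-measure set of points $x$ such that for every neighborhood $U \ni x$ one has $F^n x \in U$ for infinitely many $n$, which is the required recurrence. I do not foresee a substantive obstacle: the inclusion $D \subset E$ is the only step that uses the specifics of the model (through the finiteness of $\mu(K_M)$), while the remainder is a direct application of classical $\sigma$-finite ergodic theory together with Theorem~\ref{null}.
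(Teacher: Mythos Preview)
Your argument is correct and follows the same underlying idea as the paper's: both establish that the dissipative (transient) part of the system is contained in the escaping set $E$ by exploiting the finiteness of $\mu$ on velocity sublevel sets $\{v\le M\}$, and then invoke Theorem~\ref{null}. The packaging differs, however. You appeal to the Hopf decomposition and Halmos' recurrence theorem as black boxes, deducing $D\subset E$ from the characterization of the dissipative part and then passing to topological recurrence via a countable basis. The paper instead argues by hand: on each invariant set $E_N=\{\liminf v_n\le N\}$ it builds the first-return map to the finite-measure set $V_N=\{v\le N+1\}$ and applies ordinary Poincar\'e recurrence there, then transfers recurrence back to an arbitrary finite-measure subset $A\subset E_N$ via the hitting-time map. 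Your route is cleaner and more conceptual, at the cost of quoting standard $\sigma$-finite ergodic theory; the paper's is more self-contained but slightly more involved. Both rest on the same model-specific ingredient you identified, namely $\mu(\{v\le M\})<\infty$.
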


However, under the assumptions in Theorem \ref{poserg} or Theorem \ref{negerg}, escaping and bounded orbits still exist:
\begin{thm}\label{espbdd}
Suppose that $f(t)$ is admissible. Then $F$ possesses escaping and bounded orbits with arbitrarily high energy.
\end{thm}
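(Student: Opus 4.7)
The plan is to construct the required orbits first for the limit map $F_\infty$ on the cylinder $\mathbb{R}/\mathbb{Z}\times\mathbb{R}$ and then transfer to $F$ using the approximation $F=F_\infty+\mathcal{O}(1/v)$ at large $v$, leveraging the hyperbolicity of $F_\infty$ guaranteed by admissibility.

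For bounded orbits I produce a family of hyperbolic fixed points ascending in $v$. Since $f$ is continuous and 1-periodic, $\int_0^1\dot f\,dt=0$, so $\dot f$ vanishes at some $\tilde t_*\in(0,1)$. A direct computation shows $(\tilde t_*,jg)$ is a fixed point of $F_\infty$ on the cylinder for each integer $j$, because $\tilde t_*+2j\equiv\tilde t_*\pmod 1$ and $\dot f(\tilde t_*)=0$. Its Jacobian has determinant $1$ and trace $2+4\ddot f(\tilde t_*)/g\in(2,\infty)\cup(-\infty,-2)$ by admissibility, so it is hyperbolic. Since $\tilde t_*$ is bounded away from the singular set and $\|F-F_\infty\|=\mathcal{O}(1/(jg))$ near $(\tilde t_*,jg)$, the implicit function theorem applied to $F(p)=p$ yields a hyperbolic fixed point of $F$ on the cylinder within $\mathcal{O}(1/(jg))$ of $(\tilde t_*,jg)$ for every sufficiently large $j$. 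Lifting back to the universal cover gives a periodic orbit of $F$ whose $v$-coordinate stays in a neighborhood of $jg$; this is a bounded orbit with energy $\sim(jg)^2/2\to\infty$ as $j\to\infty$.

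For escaping orbits I exploit the singular line. Take $\ddot f>0$ for concreteness (the other case is symmetric); then $\dot f$ is strictly increasing on $(0,1)$ with zero mean, so $\dot f(1^-)>0$. Consider the strip $S_\epsilon:=\{(t,v):(t+2v/g)\bmod 1\in[1-\epsilon,1)\}$, which is invariant under $v\mapsto v+g$. An orbit of $F_\infty$ that remains in $S_\epsilon$ gains at least $2\dot f(1^-)-\mathcal{O}(\epsilon)>0$ in velocity per iterate and hence escapes linearly. The Jacobian of $F_\infty$ on $S_\epsilon$ has trace $\approx 2+4\ddot f(1^-)/g>2$, so $F_\infty$ is uniformly hyperbolic on $S_\epsilon$ with a contracting direction transverse to the singular line; an inverse-iteration (graph-transform) argument using this contraction produces a nonempty Cantor set $\bigcap_{n\ge 0}F_\infty^{-n}(S_\epsilon)$ of such orbits, and by lifting the initial point to heights $v_0+jg$ the initial velocity can be made arbitrarily large. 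The perturbation from $F_\infty$ to $F$ contributes $\mathcal{O}(1/v_n)=\mathcal{O}(1/n)$ per step, summing to $\mathcal{O}(\log N)$, which is dwarfed by the linear velocity growth; the hyperbolicity persists under this perturbation at large $v$, and the Cantor set deforms into a Cantor set of escaping orbits of $F$ with arbitrarily high initial energy.

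The main obstacle is the escape construction, specifically showing that the inverse-iteration scheme in the strip $S_\epsilon$ (with its piecewise smooth singular boundary) genuinely yields a nonempty invariant Cantor set, both for $F_\infty$ and, after perturbation, for $F$. I would handle this using the piecewise hyperbolic machinery already needed for Theorem \ref{exp}: the uniform cone fields and expansion factors established there for $\tilde F_\infty$ lift to $F_\infty$ on the cylinder and survive the $\mathcal{O}(1/v)$ perturbation at sufficiently large $v$, enabling a Hadamard--Perron-style construction of the Cantor set. The bounded-orbit part, by contrast, is essentially self-contained through the implicit function theorem.
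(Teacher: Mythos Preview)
Your bounded-orbit argument is correct and in fact cleaner than the paper's: the points $(\tilde t_*,jg)$ with $\dot f(\tilde t_*)=0$ are \emph{exact} fixed points of the collision map $F$ for all large $j$ (a direct check from the collision equations \eqref{collision} with flight time $s_0=2j$), so the implicit function theorem is not even needed. The paper instead produces bounded orbits as a by-product of its escaping-orbit machinery, by alternating velocity-increasing and velocity-decreasing blocks; your construction is more elementary and gives genuinely periodic (not merely bounded) orbits.

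The escaping-orbit argument, however, has a real gap: for generic admissible $f$ the set $\bigcap_{n\ge0}F_\infty^{-n}(S_\epsilon)$ is \emph{empty} once $\epsilon$ is small. Put $\delta_n:=t_{n+1}-t_n=2v_n/g$; from $v_n-v_{n-1}=2\dot f(\tilde t_n)$ one gets the exact recursion $\delta_n-\delta_{n-1}=\tfrac{4}{g}\dot f(\tilde t_n)$. An orbit remaining in $S_\epsilon$ has $\tilde t_n\in[1-\epsilon,1)$ for all $n$, so every increment $\delta_n-\delta_{n-1}$ is pinned near the fixed constant $\tfrac{4}{g}\dot f(1^-)$; but staying in the strip also forces $\delta_n\bmod 1\in(-\epsilon,\epsilon)$ for every $n$. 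Unless $\tfrac{4}{g}\dot f(1^-)$ happens to be an integer these two constraints are incompatible already for three consecutive hits, i.e.\ $S_\epsilon\cap F_\infty^{-1}(S_\epsilon)\cap F_\infty^{-2}(S_\epsilon)=\emptyset$. Uniform hyperbolicity on $S_\epsilon$ does not help, since there is nothing for a graph transform to act on. The paper's route is probabilistic rather than geometric: it applies the CLT on proper standard pairs (Proposition~\ref{cltcurve}) to the drift $\gamma$ to locate, on any sufficiently long unstable curve, a subcurve on which the velocity has risen by $A\sqrt{n_0N_0}$ after $n_0N_0$ iterates, while the Growth Lemma (Lemma~\ref{growthlemma}) guarantees this subcurve can be taken long enough to iterate the step. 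Nesting such subcurves yields an escaping orbit without ever confining $\tilde t_n$ to a fixed subinterval, thereby sidestepping the arithmetic obstruction above.
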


Moreover, $F$ satisfies the following \emph{global global mixing} property for \emph{global functions}. We say a function $\Phi$ is \emph{global} if it is bounded, uniformly continuous and has a finite average $\bar{\Phi}$ in the following sense: for any $\epsilon$ there exists $N$ so large that for each square $V$ of size $\mu(V)>N$ we have $$\left\vert\frac{1}{\mu(V)} \int_V \Phi d\mu-\bar{\Phi}\right\vert \le \epsilon.$$ We denote by $\mathbb{G}_U$ the space of all such global functions.
\begin{thm}[Global Global Mixing]\label{ggm}
Suppose that the wall motion is admissible. Then $F$ is global global mixing with respect to $\mathbb{G}_U$, i.e. for any $\Phi_1,\Phi_2\in \mathbb{G}_U$, the following holds 
\begin{align*}
  \lim_{n\to\infty}\limsup_{\mu(V)\to\infty}\frac{1}{\mu(V)}\int_V \Phi_1\cdot(\Phi_2\circ F^n) d\mu &=\\
  \lim_{n\to\infty}\liminf_{\mu(V)\to\infty}\frac{1}{\mu(V)}\int_V \Phi_1\cdot(\Phi_2\circ F^n) d\mu &= \bar{\Phi}_1 \bar{\Phi}_2.
\end{align*}
\end{thm}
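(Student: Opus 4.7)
The strategy is to decouple a large-scale averaging (producing $\bar\Phi_1 \bar\Phi_2$) from the small-scale torus mixing supplied by Theorem \ref{exp}. Writing $\Phi_i = \bar\Phi_i + \Psi_i$ with $\Psi_i := \Phi_i - \bar\Phi_i$, I would split
\[
\frac{1}{\mu(V)}\int_V \Phi_1 (\Phi_2 \circ F^n) \, d\mu = \bar\Phi_1 \bar\Phi_2 + \bar\Phi_2 A_1(V) + \bar\Phi_1 A_2(V,n) + C(V,n),
\]
where $A_1(V) = \mu(V)^{-1}\int_V \Psi_1\,d\mu$, $A_2(V,n) = \mu(V)^{-1}\int_V(\Psi_2\circ F^n)\,d\mu$, and $C(V,n)$ is the centered correlation. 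The global property of $\Phi_1$ gives $A_1(V) \to 0$ as $\mu(V) \to \infty$. Since $F$ preserves $\mu$, $A_2(V,n)$ equals the $\mu$-average of $\Psi_2$ over $F^n(V)$; as $F^n(V) \approx F_\infty^n(V)$ is a parallelogram of bounded aspect ratio (for fixed $n$) and of the same $\mu$-measure as $V$, a mild extension of the global property from squares to such parallelograms yields $A_2(V,n) \to 0$ for each fixed $n$.

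It remains to show $\limsup_{\mu(V)\to\infty}|C(V,n)| \to 0$ as $n \to \infty$. I would first replace $F^n$ by $F_\infty^n$ inside $C(V,n)$: since $F = F_\infty + O(1/v)$ and $\Psi_2$ is uniformly continuous with modulus $\omega$, the error is pointwise $O(\omega(C^n/v_{\min}(V)))\|\Psi_1\|_\infty$, which vanishes as $v_{\min}(V)\to\infty$. Next I would decompose $V$ into fundamental squares $S_{k,l} = [k,k+1)\times[lg,(l+1)g)$ of the covering $\mathbb{R}^2 \to \mathbb{T}^2$. Using that $\dot f$ is $1$-periodic, one checks the quasi-commutation $F_\infty(t+k, v+lg) = F_\infty(t,v) + (k+2l, lg)$, so on each $S_{k,l}$, $F_\infty^n$ corresponds to $\tilde F_\infty^n$ on $\mathbb{T}^2$ together with a deterministic label shift $M^n(k,l)$, where $M = \bigl(\begin{smallmatrix}1 & 2\\ 0 & 1\end{smallmatrix}\bigr)$, plus a bounded integer-valued winding cocycle $\Pi_n(\tilde t,\tilde v)$. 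The contribution of $S_{k,l}$ to $C(V,n)$ then becomes a torus correlation between appropriate pullbacks of $\Psi_1$ and $\Psi_2$, the latter translated by $M^n(k,l) + \Pi_n$.

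Finally, I would apply Theorem \ref{exp} to each such torus correlation. Since we only have uniform continuity of $\Phi_i$, I would approximate each pullback in $L^1(\tilde\mu)$ by a dynamically H\"older observable, controlling the error via $\omega$. Theorem \ref{exp} then delivers $O(e^{-bn})$ plus the approximation residue on each fundamental square. Summing over the squares tiling $V$ and dividing by $\mu(V)$ yields $|C(V,n)| \le Ce^{-bn} + R(V)$, with $R(V) \to 0$ as $\mu(V)\to\infty$ by the global property applied to the residuals. The principal obstacle is controlling these residuals uniformly in $(k,l)$: because the cocycle-induced shift $M^n(k,l) + \Pi_n$ translates $\Phi_2$ by amounts growing with $|l|$ and $n$, the residue bound must invoke the global property not just for $\Phi_2$ at fixed translates but uniformly over the whole family of translates arising as $(k,l)$ ranges across $V$. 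Passing first $\mu(V)\to\infty$ (so $R(V) \to 0$) and then $n\to\infty$ (so $Ce^{-bn}\to 0$) gives $\lim_n \limsup_V |C(V,n)| = 0$; combined with the vanishing of $A_1$ and $A_2$, this yields the claimed limit for $\limsup$, and the identical argument applies to $\liminf$.
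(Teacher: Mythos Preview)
The paper's proof takes a completely different route from yours: it is a three-line appeal to external machinery. Having verified Assumptions 1--5 of Section~6.1, the paper invokes \cite{CWZ} to conclude that $\tilde F_\infty$ admits a Young tower with exponential tail, and then cites Theorems~2.4 and~2.9 of \cite{DN18}, which supply global-global mixing for any collision map whose high-energy dynamics are approximated by a periodic factor carrying such a tower. Your sketch, by contrast, attempts to execute directly the kind of fundamental-domain argument that \cite{DN18} packages abstractly; this is a legitimate and more self-contained strategy, but several steps are not under control.

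The principal gap is the bound $|C(V,n)|\le Ce^{-bn}+R(V)$. When you apply Theorem~\ref{exp} on a fundamental square $S_{k,l}$, exponential decay says the torus correlation is within $O(e^{-bn})$ of the \emph{product of torus means} $\bigl(\int_{\mathbb T}\psi_1^{(k,l)}\bigr)\bigl(\int_{\mathbb T}\psi_2^{(k',l')}\bigr)$, and that product does not vanish: you centered $\Phi_i$ only by their global averages $\bar\Phi_i$, not by the local torus averages on each square. Summing these products over the squares tiling $V$ yields a macroscopic bilinear sum of the shape $\sum_{(k,l)} a_{k,l}\,b_{M^n(k,l)+\Pi_n}$, which is neither $O(e^{-bn})$ nor, for fixed $n$, obviously $o(1)$ as $\mu(V)\to\infty$; controlling it is exactly where the substantive work lies, and your sketch does not do it. Two further issues you elide: the invariant measure is $\mu=(v-\dot f)\,dt\,dv$, so the fundamental squares carry unequal weights growing with $v$ and your Lebesgue-based accounting needs adjustment; and because the cocycle $\Pi_n$ is piecewise constant rather than constant on $\mathbb T$, the second observable in your ``torus correlation'' is not a single function but changes across the continuity components of $\tilde F_\infty^n$, so even formulating the correlation to which Theorem~\ref{exp} applies requires more care than you indicate.
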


\section{Preliminaries}

In this section, we discuss the collision map. The study of the collision map relies substantially on the behavior of its limiting map, i.e. the approximated collision map for large velocities. We also discuss the singularity lines/curves of the limit map as they will play a very important role in the proofs later.

\subsection{The Collision Map}
We denote $s_n = t_{n+1} - t_n$ the flight time between two consecutive collisions.

Two consecutive collisions satisfy the following equations
\begin{equation}\label{collision}
 \left \{
   \begin{aligned}
    &-(v_n - g s_n - \dot{f}(t_{n+1})) = v_{n+1} - \dot{f}(t_{n+1})\\
    &f(t_n) + v_n s_n - \frac{1}{2}g s_n^2 = f(t_{n+1}).
   \end{aligned} \right.
\end{equation}

We compute the derivative of the collision map $F$ by differentiating these equations \cref{collision} 
\[
dF = 
  \begin{pmatrix}
    1+\frac{\dot{f}(t_n) - \dot{f}(t_{n+1})}{w_{n+1}}  & \frac{s_n}{w_{n+1}}\\
    2\ddot{f}(t_{n+1})+(2\ddot{f}(t_{n+1})+g) \frac{\dot{f}(t_n) - \dot{f}(t_{n+1})}{w_{n+1}} & (2\ddot{f}(t_{n+1})+g)\frac{s_n}{w_{n+1}} -1
   \end{pmatrix} . 
\]

We observe that $\det dF = \frac{w_n}{w_{n+1}}$ and hence $F$ preserves the measure $\mu=wdtdv$ on the phase cylinder.

\subsection{The Limit Map}

If we only consider collisions with large velocities, the dynamics can be approximated by 
\[
F(t,v) = F_{\infty}(t,v) + \mathcal{O}\left(\frac{1}{v}\right)
\]
where $$F_{\infty}(t,v) = \left( t+\frac{2v}{g}, v+2\dot{f}\left(t+\frac{2v}{g}\right)\right).$$

Denote as $(t_1,v_1) = F_{\infty}(t_0,v_0)$, then $$t_1 = t_0+\frac{2v_0}{g},\quad v_1 = v_0+2\dot{f}(t_1).$$
As mentioned in Section 2, the limit map $F_{\infty}$ covers a map $\tilde{F}_{\infty}$ on the torus $\mathbb{T}=\mathbb{R}/\mathbb{Z}\times\mathbb{R}/g\mathbb{Z}$
\[
\tilde{t}_1 = \tilde{t}_0+\frac{2\tilde{v}_0}{g}, \ \tilde{v}_1 = \tilde{v}_0+2\dot{f}(\tilde{t}_1)
\]
where $\tilde{t} = t \mbox{ (mod 1)}$, $\tilde{v} = v \mbox{ (mod g})$.\\

Denote as $k(t) = \ddot{f}(t)$. The dynamics of $\tilde{F}_{\infty}$ can be decomposed as 
\[
\tilde{t}_1 = \tilde{t}_0+\frac{2\tilde{v}_0}{g} \mbox{ (mod 1)}, \ \tilde{v}_0 = \tilde{v}_0,
\]
and 
\[
\tilde{t}_1 = \tilde{t}_1, \ \tilde{v}_1 =  \tilde{v}_0+2\dot{f}(\tilde{t}_1)
\]
hence the derivative of $\tilde{F}_{\infty}$ at $(\tilde{t}_0, \tilde{v}_0)$ is 
\[
d_{(\tilde{t}_0, \tilde{v}_0)}\tilde{F}_{\infty} = 
  \begin{pmatrix}
    1 & \frac{2}{g}\\
    2k_1 & \frac{4k_1}{g}+1
   \end{pmatrix}.
\]

We observe that $\det d\tilde{F}_{\infty} = 1$, so $\tilde{F}_{\infty}$ preserves the Lebesgue measure $\tilde{\mu}=d\tilde{t}d\tilde{v}$ on the torus.

\subsection{The Singularity Lines of the Limit Map}
A singularity occurs when the ball collides with wall at the singularities of the wall motion, i.e. $t \in \mathbb{N}$, hence the singularity line $\mathcal{S}^+$ of $\tilde{F}_{\infty}$ consists of the points whose next collisions happen at integer times, i.e.
\[
\mathcal{S}^+=\{\tilde{t}_1 = 0\}=\left\{\tilde{t}_0+\frac{2\tilde{v}_0}{g}\equiv_1 0\right\}. 
\]
Similarly, the singularity line $\mathcal{S}^-$ of $\tilde{F}_{\infty}^{-1}$ consists of the points whose the preimages land on integer times, i.e.
\[
\mathcal{S}^-=\{\tilde{t}_{-1} = 0\}=\left\{\tilde{t}_0+\frac{4}{g}\dot{f}(\tilde{t}_0)-\frac{2\tilde{v}_0}{g}\equiv_1 0\right\}.
\]

We observe that $\mathcal{S}^{\pm}$ consists of finitely many line/curve segments.

\section{Ergodicity of the Limit Map}
In this section we establish the ergodicity of the limit map $\tilde{F}_{\infty}$ under the assumptions in Theorem \ref{poserg} and Theorem \ref{negerg}. We use the result by Liverani and Wojtkowski in \cite{LW} where they proved ergodicity for a large class of Hamiltonian systems with invariant cones. We first describe the class of symplectic maps considered in \cite{LW} and then 
prove that $\tilde{F}_{\infty}$ satisfies the conditions of \cite{LW}.

Those conditions involve strictly invariant cones and the least coefficient of expansion, which are defined as follows.\\
For a point $p\in X$, we say $p$ possess \emph{strictly monotone cones} if there exist cone fields $\mathcal{C}(p)$ and its complementary $\mathcal{C}'(p)$ such that $d_pT$ preserves strictly the cone $\mathcal{C}(p)$ and that $d_pT^{-1}$ preserves strictly its complementary cone $\mathcal{C}'(p)$.\\
For $p\in X$ with strictly monotone cones $\mathcal{C}(p),\mathcal{C}'(p)$, there is an associated defining quadratic form $\mathcal{Q}_p$, i.e. $$\mathcal{C}(p)=\{\omega\in T_p X:\mathcal{Q}_p(\omega)\ge0\}.$$ The coefficient $\beta$ of expansion at $\omega\in T_p X$ is computed as $$\beta(\omega,d_p T)=\sqrt{\frac{\mathcal{Q}_p(d_p T\omega)}{\mathcal{Q}_p(\omega)}}$$ and the \emph{least coefficient $\sigma$ of expansion} is defined as $$\sigma(d_p T)=\inf_{\omega\in int\mathcal{C}(p)}\beta(\omega,d_p T)$$

Now we list here the six conditions of \cite{LW} in two dimensional case.

\begin{enumerate}
   \item The phase space $X$ is a finite disjoint union of compact subsets of a linear symplectic space $\mathbb{R}^2$ with dense and connected interior and \emph{regular} boundaries, i.e. they are finite unions of curves which intersect each other at at most finitely many points.
   
   \item For every $n \ge 1$ the singularity sets $\mathcal{S}_n^+$ and $\mathcal{S}_n^-$ of $T^n$ and $T^{-n}$ respectively are regular.
   
   \item Almost every point $p \in X$ possesses strictly monotone cones $\mathcal{C}(p)$ and its complementary $\mathcal{C}'(p)$.
   
   \item The singularity sets $\mathcal{S}^+$ and $\mathcal{S}^-$ are \textit{properly aligned}, i.e. the tangent line of $\mathcal{S}^-$ at any $p \in \mathcal{S}^-$ is contained strictly in the cone $\mathcal{C}(p)$ and the tangent line of $\mathcal{S}^+$ at any $p \in \mathcal{S}^+$ is contained strictly in the complementary cone $\mathcal{C}'(p)$. In fact, it is sufficient to assume that there exists $N$ such that $T^N \mathcal{S}^-$ and $T^{-N} \mathcal{S}^+$ are properly aligned.
   
   \item \emph{Noncontraction}: There is a constant $a\in(0,1]$ such that for every $n \ge 1$ and for every $p \in X \backslash \mathcal{S}_n^+$ 
\[
|| d_p T^n v || \ge a ||v||
\]
for every vector $v \in \mathcal{C}(p)$.

   \item \emph{Sinai-Chernov Ansatz}: For almost every $p \in \mathcal{S}^-$ with respect to the measure $\mu_{\mathcal{S}}$, its least coefficient of expansion satisfies 
   \[
   \lim_{n \to \infty} \sigma (d_p T^n) = \infty.
   \]
\end{enumerate}

We note from \cite{LW} that $\sigma$ is supermultiplicative, i.e. $\sigma(L_1L_2) \ge \sigma(L_1) \sigma(L_2)$, and that if the coordinates are such that the cone $\mathcal{C}(p)$ is the positive cone and $d_pT$ takes the form 
\[d_p T=
   \begin{pmatrix}
        A & B\\
        C & D
    \end{pmatrix}
\]
then $\sigma$ can be computed as $\sigma(L) = \sqrt{1+t} + \sqrt{t}$ where $t = BC$.

Liverani and Wojtkowski have proved local ergodicity for symplectic maps satisfying the above six conditions:

\begin{thm}[\cite{LW}]\label{thmlw}
Suppose $(T,X)$ satisfies the above conditions. For any $n \ge 1$ and for any $p \in X \backslash \mathcal{S}_n^+$ such that $\sigma (d_p T^n)>3$ there is a neighborhood of $p$ which is contained in one ergodic component of $T$.
\end{thm}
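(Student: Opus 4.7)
The plan is to run the classical Hopf argument in the neighborhood of $p$, adapted to piecewise smooth hyperbolic systems in the style of Sinai, Chernov, and the authors of the cited paper. The strategy has three layers: (a) promote the pointwise expansion bound $\sigma(d_p T^n)>3$ to a uniform cone hyperbolicity bound on a whole neighborhood $U$ of $p$; (b) use the proper alignment and Sinai-Chernov Ansatz conditions to guarantee that a full-measure subset of $U$ carries local stable and unstable manifolds that are long enough and abundant enough to form a measurable lamination with absolutely continuous holonomies; (c) conclude via the Hopf chain construction that Birkhoff averages of continuous observables are essentially constant on $U$, so $U$ lies in a single ergodic component.

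First, since $\sigma$ is supermultiplicative and continuous in the matrix entries, and since $d_q T^n$ depends continuously on $q$ away from $\mathcal{S}_n^+$, the bound $\sigma(d_p T^n)>3$ persists on a whole neighborhood $U \ni p$, uniformly bounded below by some $\sigma_0 > 3$. Together with condition (3), this gives honest uniform hyperbolicity for $T^n$ on $U \setminus \mathcal{S}_n^+$, and by supermultiplicativity, iterating $T^n$ yields exponentially growing expansion in the cone $\mathcal{C}$ and exponential contraction in the complementary cone $\mathcal{C}'$. Standard Pesin-type constructions adapted to the piecewise smooth setting then produce local unstable manifolds $W^u_{\mathrm{loc}}(q)$ and local stable manifolds $W^s_{\mathrm{loc}}(q)$ at $\mu$-almost every $q \in U$.

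The main obstacle, and the step that really uses all six conditions in concert, is to show that outside an arbitrarily small measure set these local manifolds are genuinely long — not curtailed by nearby singularities. Here the proper alignment in condition (4) is essential: because $T^N\mathcal{S}^-$ is tangent to $\mathcal{C}$ and $T^{-N}\mathcal{S}^+$ is tangent to $\mathcal{C}'$, the forward iterates of $\mathcal{S}^+$ shrink in the stable direction while the backward iterates of $\mathcal{S}^-$ shrink in the unstable direction, so the total one-dimensional measure of singularities intersecting a small ball scales like a geometric series with ratio determined by $\sigma_0$. Quantitatively, the Sinai-Chernov Ansatz in condition (6) guarantees that along $\mathcal{S}^-$ the expansion coefficient $\sigma(d_p T^n)$ tends to infinity, which feeds directly into the standard Chernov-style estimate bounding the measure of points in $U$ whose local manifold length is smaller than $\epsilon$ by $O(\epsilon)$. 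The numerical threshold $3$ in the hypothesis is exactly what is needed to dominate the dyadic covering of $U$ by "good" rectangles whose sides are a stable and an unstable manifold; this is the calculation one really has to grind through, and it is the heart of the Liverani-Wojtkowski proof.

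With this infrastructure, the Hopf argument runs on autopilot. For any continuous $\phi$, the forward and backward Birkhoff averages $\phi^\pm$ agree $\mu$-a.e.\ and are constant along local stable, respectively unstable, manifolds. Absolute continuity of the stable and unstable holonomies — which follows from the noncontraction condition (5) together with the singularity-length estimate above — lets one transport the value of $\phi^+$ along a Hopf zig-zag between any two density points of $U$. Hence $\phi^+$ is essentially constant on $U$, and since continuous observables are dense in $L^1(\mu)$, the neighborhood $U$ belongs to a single ergodic component of $T$.
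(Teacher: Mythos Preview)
The paper does not prove this theorem. Theorem~\ref{thmlw} is quoted from Liverani--Wojtkowski \cite{LW} and used as a black box: the paper's own contribution is to verify the six structural hypotheses for the concrete map $\tilde{F}_{\infty}$ (invariant cones, proper alignment, noncontraction, Sinai--Chernov Ansatz, regularity of $\mathcal{S}_n^{\pm}$) and then to invoke the cited result to get local ergodicity. So there is no ``paper's own proof'' to compare your attempt against.

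That said, your sketch is a fair high-level summary of how the Liverani--Wojtkowski argument runs: promote the cone expansion to a neighborhood by continuity, use proper alignment so that forward singularity images contract in the stable direction and backward images in the unstable direction, use the Ansatz to control the measure of points with short local manifolds, and then run the Hopf chain with absolutely continuous holonomies. Where your outline is thinnest is exactly where the real work lies: the quantitative step that justifies the threshold $\sigma>3$. In \cite{LW} this comes from a specific combinatorial covering argument (the ``tail bound'' on the measure of points whose local stable/unstable manifold is shorter than $\epsilon$) in which one compares the geometric growth rate of singularity branches against the contraction rate in the cone; the constant $3$ arises from summing a geometric series with ratio essentially $1/(\sigma-1)$ together with a factor~$2$ coming from the two sides of each singularity curve. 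Your sentence ``this is the calculation one really has to grind through'' is honest but is not a proof; if you were actually asked to supply the argument rather than survey it, that estimate is the part you would need to write out in full.
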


Now we prove Theorem \ref{poserg}:

\begin{proof}[Proof of Theorem 1]

Suppose that $\ddot{f}>0$.\\

First we prove local ergodicity by verifying the above six conditions for $\tilde{F}_{\infty}$.\\

The singularity lines $\mathcal{S}^{\pm}$ are finite unions of short lines/curves and they cut our phase space $X$, which is a torus, into finitely many pieces.\\

The strict monotonicity follows easily from the fact that $d\tilde{F}_{\infty}$ is positive when $\ddot{f}>0$, hence $d_p\tilde{F}_{\infty}$ preserves strictly the positive cone $\mathcal{C}^+(p) = \{ \delta \tilde{t} \delta \tilde{v} \ge 0 \}$ and $d_p\tilde{F}_{\infty}^{-1}$ preserves strictly the complementary negative cone $\mathcal{C}^-(p) = \{ \delta \tilde{t} \delta \tilde{v} \le 0 \}$.\\

It is straightforward from the previous discussion that $\mathcal{S}^{\pm}$ are properly aligned since the slope of tangent line to $\{ \tilde{t}_1 = 0 \}$ at $(\tilde{t}_0,\tilde{v}_0)$ is $\displaystyle -\frac{g}{2}<0$, and the slope of the tangent line to $\{ \tilde{t}_{-1} = 0 \}$ is $\displaystyle \frac{g}{2}\left(1+\frac{4k_0}{g}\right)>0$.\\

Next we verify the noncontraction property.\\
For any nonsingular point $p = (\tilde{t}, \tilde{v})$, any vector $\bm{v} = (\delta \tilde{t}, \delta \tilde{v}) \in \mathcal{C}^+(p)$ with $\delta \tilde{t} \delta \tilde{v} \ge 0$, 
\begin{equation}\label{expansionrate}
  \begin{split}
    || d_p\tilde{F}_{\infty} \bm{v} ||^2 
    & = (1+4k_1^2) \delta \tilde{t}^2 + \left(\frac{4}{g^2} + \left(\frac{4k_1}{g}+1\right)^2\right) \delta \tilde{v}^2 + \left(\frac{4}{g} + 4k_1\left(\frac{4k_1}{g}+1\right)\right)\delta \tilde{t} \delta \tilde{v}\\
    & \ge \delta \tilde{t}^2 + \delta \tilde{v}^2 = || \bm{v} ||^2
  \end{split}
\end{equation}
thus the noncontraction property follows for $a=1$.\\

Now we verify the Sinai-Chernov Ansatz.\\
We denote as $\mathcal{S}_0 = \{ \tilde{t}_0 = 0 \}$. For any point $p \in \mathcal{S}^- \backslash \cup_{n \ge 0} \mathcal{S}_n$, which excludes a $\tilde{\mu}_{\mathcal{S}^-}$-null set since $\mathcal{S}^-$ intersect each $\mathcal{S}_n$ at at most finitely many points,
\[
\sigma(d_p\tilde{F}_{\infty}) = \sqrt{1+\frac{4k_1}{g}} + \sqrt{\frac{4k_1}{g}} \ge \sqrt{1+\frac{4k_{\min}}{g}} + \sqrt{\frac{4k_{min}}{g}} >1,
\]
where $\displaystyle k_{\min}=\min_{t\in(0,1)}\ddot{f}(t)>0$ by our assumption, then the supermultiplicativity of $\sigma$ implies that $\displaystyle \lim_{n \to \infty} \sigma (d_p \tilde{F}_{\infty}^n) = \infty$.\\

Finally, it remains to check that the singularity sets $\mathcal{S}_n^-$ and $\mathcal{S}_n^+$ of $\tilde{F}_{\infty}^n$ and $\tilde{F}_{\infty}^{-n}$ respectively are regular. We claim that for every $n>0$, $\mathcal{S}_n^-$ ($\mathcal{S}_n^+$) is a finite union of increasing (decreasing) curves, i.e. curves with bounded positive (negative) slope. This can be proved by an inductive argument. Firstly the claim holds for $n=1$ as already shown before. Now suppose $\mathcal{S}_n^-$ is a finite union of increasing curves. Since $\mathcal{S}_{n+1}^- = \mathcal{S}_n^- \cup \tilde{F}_{\infty} \mathcal{S}_n^-$ and $d\tilde{F}_{\infty}$ is a positive matrix and the second derivative $\ddot{f}$ is bounded, $\mathcal{S}_{n+1}^-$ is a finite union of increasing curves. The claim for $\mathcal{S}_n^+$ can be proved similarly. \\

Observe that $k(t)=\ddot{f}(t)>0$ is uniformly bounded, hence there exists $N>0$ such that $\sigma(d_p\tilde{F}_{\infty}^N) > 3$ for any $p \notin \mathcal{S}^+_N$. Therefore we have obtained local ergodicity for $\tilde{F}_{\infty}$ by Theorem \ref{thmlw}.\\

Now we argue for global ergodicity by contradiction.\\
Suppose that there exists some nontrivial ergodic component $M$ of $\tilde{F}_{\infty}$, then its boundary $\partial M$ lies on $\mathcal{S}^+_N$. But 
$$\mathcal{S}^+_N = \bigcup_{n=0}^{N-1} \tilde{F}_{\infty}^{-n} \mathcal{S}^+$$
hence there exists a smallest integer $N_0 \ge 1$ such that $\partial M \in \mathcal{S}^+_{N_0}$.\\
Observe that $\tilde{F}_{\infty}(\partial M) = \partial M$ by the invariance of $M$. However, $\tilde{F}_{\infty}(\mathcal{S}^+_{N_0}) = \mathcal{S}^+_{N_0 -1} \cup \mathcal{S}_0$, which contradicts the minimality of $N_0$. Note that although $\tilde{F}_{\infty}$ is multivalued at $\mathcal{S}^+$, we have $\tilde{F}_{\infty} \mathcal{S}^+ = \mathcal{S}_0$ anyway.\\

Therefore we conclude that there cannot be any nontrivial ergodic component and hence $\tilde{F}_{\infty}$ is ergodic.
\end{proof}

The proof of Theorem \ref{negerg} follows a similar strategy. 
The main difficulty arises from finding invariant cones as the derivative matrix is no longer positive. However we still manage to construct invariant cones out of the ``eigenvectors" of the derivative matrix.

\begin{proof}[Proof of Theorem 2]
Suppose $\ddot{f}<-g$.\\
First of all, we recall that the derivative of $\tilde{F}_{\infty}$ at $(\tilde{t}_0, \tilde{v}_0)$ is 
\[
d_{(\tilde{t}_0, \tilde{v}_0)}\tilde{F}_{\infty} = 
  \begin{pmatrix}
    1 & \frac{2}{g}\\
    2k_1 & \frac{4k_1}{g}+1
   \end{pmatrix}.
\]

Now we consider the following two cones 
\[
\mathcal{C}^u(\tilde{t}_0,\tilde{v}_0)=\left\{\frac{\delta v}{\delta t}\le k_0\right\},\quad 
\mathcal{C}^s(\tilde{t}_0,\tilde{v}_0)=\left\{\frac{\delta v}{\delta t}\ge k_0\right\}
\]
We verify that they are invariant under $d_{(\tilde{t}_0,\tilde{v}_0)}\tilde{F}_{\infty}$ and $d_{(\tilde{t}_0,\tilde{v}_0)}\tilde{F}_{\infty}^{-1}$ respectively.\\
For any $(\delta t,\delta v)\in\mathcal{C}^u(\tilde{t}_0,\tilde{v}_0)$,
\[
  \begin{pmatrix} \bar{\delta t} \\ \bar{\delta v} \end{pmatrix} = 
  \begin{pmatrix} 1&\frac{2}{g} \\ 2k_1&\frac{4k_1}{g}+1 \end{pmatrix}
  \begin{pmatrix} \delta t \\ \delta v \end{pmatrix}=
  \begin{pmatrix} \delta t+\frac{2}{g}\delta v \\ 2k_1(\delta t+\frac{2}{g}\delta v)+\delta v \end{pmatrix}
\]
thus 
\begin{equation}\label{unstablecone}
  \begin{split}
    \frac{\bar{\delta v}}{\bar{\delta t}} 
    &= 2k_1 + \frac{\frac{\delta v}{\delta t}}{1+\frac{2}{g}\frac{\delta v}{\delta t}}\\
    &\le 2k_1+\frac{k_0}{1+\frac{2k_0}{g}}\\
    &< 2k_1+g < k_1
  \end{split}
\end{equation}
where the first inequality follows from $\frac{\delta v}{\delta t}\le k_0$ and the last two inequalities from $\ddot{f}<-g$. Thus $(\bar{\delta t},\bar{\delta v})\in\mathcal{C}^u(\tilde{t}_1,\tilde{v}_1)$.\\
For any $(\delta t,\delta v)\in\mathcal{C}^s(\tilde{t}_0,\tilde{v}_0)$
\[
  \begin{pmatrix} \tilde{\delta t} \\ \tilde{\delta v} \end{pmatrix} = 
  \begin{pmatrix} \frac{4k_0}{g}+1&-\frac{2}{g} \\ -2k_0&1 \end{pmatrix}
  \begin{pmatrix} \delta t \\ \delta v \end{pmatrix}=
  \begin{pmatrix} \delta t-\frac{2}{g}(-2k_0\delta t+\delta v)\\ -2k_0\delta t+\delta v \end{pmatrix},
\]
thus 
\begin{equation}\label{stablecone}
  \begin{split}
    \frac{\tilde{\delta v}}{\tilde{\delta t}} 
    &= -\frac{g}{2}+\frac{g}{2}\frac{1}{1+\frac{2}{g}(2k_0-\frac{\delta v}{\delta t})}\\
    &> -\frac{g}{2}+\frac{g}{2}\frac{1}{1+\frac{2k_0}{g}}\\
    &> -g > k_{-1}
  \end{split}
\end{equation}
where the first inequality follows from $\frac{\delta v}{\delta t}\ge k_0$ and the last two inequalities from $\ddot{f}<-g$. Thus $(\tilde{\delta t},\tilde{\delta v})\in\mathcal{C}^u(\tilde{t}_{-1},\tilde{v}_{-1})$.\\

If we can verify that $\mathcal{S}^{\pm}$ are properly aligned, then the regularity of the singularity curves $\mathcal{S}_n^{\pm}$ follows automatically from the strict invariance of the cones $\mathcal{C}^{u/s}$ since $\mathcal{S}_n^{\pm}$ consist of finitely many transverse short curves.\\
We claim that $\mathcal{S}^{\pm}$ are properly aligned. Indeed, the slope of tangent line to $\{ \tilde{t}_1 = 0 \}$ at $(\tilde{t}_0,\tilde{v}_0)$ is $\displaystyle -\frac{g}{2}>k_0$, which is properly contained in $\mathcal{C}^s$. Also, the slope of the tangent line to $\{ \tilde{t}_{-1} = 0 \}$ is $\displaystyle 2k_0+\frac{g}{2}<k_0$, which is properly contained in $\mathcal{C}^u$.\\

The noncontraction property still holds with $a=1$.\\

The unstable cone $\mathcal{C}^u$ is not canonical, i.e. it is not the positive cone, hence we need to switch to the new basis $((0,1),(1,k_0))$ and $d_p\tilde{F}_{\infty}$ takes the form 
\[
\begin{pmatrix}
  \frac{2k_1}{g}+1 & k_0+k_1+\frac{2k_0k_1}{g}\\
  2/g & 1+\frac{2k_0}{g}
\end{pmatrix}
\]
Then the Sinai-Chernov Ansatz holds since 
\begin{align*}
    \sigma(d_p\tilde{F}_{\infty}) 
    &= \sqrt{1+\frac{2}{g}\left(\frac{2k_0k_1}{g}+k_0+k_1\right)} + \sqrt{\frac{2}{g}\left(\frac{2k_0k_1}{g}+k_0+k_1\right)}\\
    &\ge \sqrt{1+\frac{2}{g}\left(\frac{2k_{\min}^2}{g}+2k_{\min}\right)} + \sqrt{\frac{2}{g}\left(\frac{2k_{\min}^2}{g}+2k_{\min}\right)}>1.
\end{align*}

By Theorem \ref{thmlw} the local ergodicity of $\tilde{F}_{\infty}$ for the case when $\ddot{f}<-g$. Finally, the global ergodicity can be obtained by a similar argument as in the proof of Theorem \ref{poserg}.
\end{proof}

\section{Recurrence of the Collision Map}
In this section, we prove Theorem \ref{null} and Theorem \ref{rec} as they are direct consequences of the ergodicity of the limit map $\tilde{F}_{\infty}$ on the torus.\\
The proof of Theorem \ref{null} uses a result from \cite{deSD}, which shows that for an asymptotically periodic map with an ergodic limiting map, if the energy change in the limiting map has zero average, then the escaping orbits of the original dynamics have zero measure.

We state this result for our case specifically. First we decompose the velocity $v$ into integer part and fractional part, i.e. there exists some $m\in\mathbb{Z}$ such that 
$$v=\tilde{v}+mg \text{ where } \tilde{v} \in [0, g). $$ 
Then we decompose the limit map $F_{\infty}$ on the cylinder into its projection $\tilde{F}_{\infty}$ on the torus and a map $\gamma$ on integers $\mathbb{Z}$, i.e. 
$$(\tilde{t}_1, \tilde{v}_1, m_1) = F_{\infty}(\tilde{t}_0, \tilde{v}_0, m_0)=(\tilde{F}_{\infty}(\tilde{t}_0,\tilde{v}_0),m_0+\gamma(\tilde{t}_0,\tilde{v}_0)).$$
\begin{lemma}[\cite{deSD}]\label{lemmadsd}
Suppose that $\tilde{F}_{\infty}$ is ergodic with respect to the measure $\tilde{\mu}=d\tilde{t}d\tilde{v}$ on the torus. If the energy change of $F_{\infty}$ has zero average, i.e. $\int_{\mathbb{T}} \gamma(\tilde{t}_0, \tilde{v}_0) d\tilde{\mu} = 0$, then the escaping set of $F$ has zero measure.
\end{lemma}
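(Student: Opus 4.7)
The plan is to recognize $F_\infty$ on the cylinder as the $\mathbb{Z}$-skew product $(\tilde t,\tilde v,m)\mapsto(\tilde F_\infty(\tilde t,\tilde v),\,m+\gamma(\tilde t,\tilde v))$ over the ergodic base $(\tilde F_\infty,\tilde\mu)$, and to proceed in three stages: establish \emph{cocycle recurrence} on the $F_\infty$ side, \emph{transfer} the conclusion to the true dynamics $F$ via the asymptotic approximation $F=F_\infty+O(1/v)$, and finally \emph{push null sets} using absolute continuity.

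First I would apply Atkinson's recurrence theorem for real cocycles over ergodic transformations. Since $\tilde F_\infty$ is ergodic and $\gamma\in L^1(\tilde\mu)$ satisfies $\int\gamma\,d\tilde\mu=0$, for $\tilde\mu$-a.e.\ $(\tilde t_0,\tilde v_0)$ the Birkhoff sums $S_n\gamma=\sum_{i=0}^{n-1}\gamma\circ\tilde F_\infty^i(\tilde t_0,\tilde v_0)$ satisfy $\liminf_n|S_n\gamma|=0$, and in particular $|S_n\gamma|\not\to\infty$. By Fubini, the set of $(\tilde t_0,\tilde v_0,m_0)$ on the cylinder whose $F_\infty$-orbit has $m_n\to\infty$ has Lebesgue measure zero, so the escaping set of $F_\infty$ is null.

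To transfer from $F_\infty$ to $F$, I would fix a large threshold $V$ and induce on the high-energy set $\{v>V\}$. One-step errors are $O(1/v)$, so the first-return cocycle of $F$ to $\{v>V\}$ is, in $L^1$ with respect to the invariant measure, an $O(1/V)$ perturbation of the first-return cocycle of $F_\infty$. Because Atkinson recurrence is stable under sufficiently small $L^1$-perturbations of an integer cocycle over an ergodic base --- no secular drift can emerge, thanks to the zero mean of $\gamma$ together with the explicit form of the $O(1/v)$ correction inherited from the periodicity of $f$ --- the induced $F$-cocycle $m_n-m_0$ remains recurrent. Hence no $F$-orbit that eventually stays in $\{v>V\}$ can escape; taking $V$ arbitrarily large and invoking the absolute continuity of $\mu=w\,dt\,dv$ with respect to Lebesgue rules out escape on a set of full $\mu$-measure.

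The main obstacle is precisely this perturbative step: naively iterating $F=F_\infty+O(1/v)$ for $n$ steps at velocity $\ge V$ produces an accumulated error of $O(n/V)$, which is far too large to beat the recurrence scale of $S_n\gamma$. The induced-map reduction circumvents this by ensuring that the relevant comparison is between \emph{single} applications of return maps rather than long orbit segments. Alternatively, following the spirit of \cite{deSD}, one can work with second-moment estimates for the Birkhoff sums $S_n^F\gamma$ along $F$-orbits and extract a central-limit-type recurrence statement for $F$ directly; either route ultimately reduces to verifying that the $O(1/v)$ correction carries no systematic drift, which follows from the zero-average hypothesis on $\gamma$.
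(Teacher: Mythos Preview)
This lemma is not proved in the paper; it is quoted from \cite{deSD} and invoked as a black box in the proof of Theorem~\ref{null}. So there is no in-paper argument to compare against, and the question is whether your sketch stands on its own.

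Your first two stages are sound: identifying $F_\infty$ on the cylinder with the $\mathbb{Z}$-extension $(\tilde F_\infty,\gamma)$ over the ergodic base $(\tilde F_\infty,\tilde\mu)$ and invoking Atkinson's theorem correctly show that the escaping set of $F_\infty$ is Lebesgue-null.

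The third stage --- transferring recurrence from $F_\infty$ to $F$ --- has a genuine gap. The assertion that ``Atkinson recurrence is stable under sufficiently small $L^1$-perturbations of an integer cocycle over an ergodic base'' is not a theorem, and without further structure it is false: adding an arbitrarily small constant to the cocycle destroys recurrence, so ruling out drift is exactly the content to be proved, not something you may assume. More seriously, the energy increment along a genuine $F$-orbit is \emph{not} a cocycle over $\tilde F_\infty$. The projection of $F$ to the torus is a different, $v$-dependent map $\tilde F_\infty + O(1/v)$, for which you have no ergodicity statement, so Atkinson does not apply to it. Your induced-map device does not repair this: inducing $F$ on $\{v>V\}$ still yields a cocycle over a base that is not $\tilde F_\infty$, and comparing ``single applications of return maps'' cannot address the divergence of the two base orbits over the (unbounded) return times. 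The alternative you gesture at --- second-moment / CLT-type control along actual $F$-orbits in the spirit of \cite{deSD} --- is indeed closer to how the cited reference proceeds, but it requires real work (shadowing of torus coordinates, control of accumulated errors against the recurrence scale) that your sketch does not supply.
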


Now we prove Theorem \ref{null}.
\begin{proof}[Proof of Theorem \ref{null}]
If $f(t)$ is admissible, 
then, by Theorems \ref{poserg} and  \ref{negerg}, $\tilde{F}_{\infty}$ is ergodic. Thus by Lemma \ref{lemmadsd}, it suffices to check that the energy change $\gamma$ of $F_{\infty}$ has zero average.\\
With an abuse of notation, let us denote $(t_1, v_1) = F_{\infty}(t_0, v_0)$ and $(\tilde{t}_1, \tilde{v}_1) = \tilde{F}_{\infty}(\tilde{t}_0, \tilde{v}_0)$ respectively. Observe that $$\int \tilde{v}_0 d\tilde{\mu} = \int \tilde{v}_1 d\tilde{\mu}$$ since $\tilde{F}_{\infty}$ preserves the measure $\tilde{\mu}$. Thus $$\int \gamma d\tilde{\mu} = \frac{1}{g} \int (v_1 - v_0) d\tilde{\mu}.$$
But $v_1 - v_0 = 2\dot{f}(\tilde{t}_1)$, hence 
\[
\int (v_1 - v_0) d\tilde{\mu} = \int 2\dot{f}(\tilde{t}_1) d\tilde{\mu} =  \int 2\dot{f}(\tilde{t}_0) d\tilde{\mu} =0
\]
where the last two equalities follow from the fact that $\tilde{F}_{\infty}$ preserves $\tilde{\mu}$ and that $f$ is 1-periodic.
Therefore by Lemma \ref{lemmadsd} the escaping orbits of $F$ have zero measure.
\end{proof}

The set $E$ of escaping orbits is in fact the transient part of $F$, hence Theorem \ref{null} implies that $F$ is recurrent, in the spirit of \cite{Dol}:

\begin{proof}[Proof of Corollary \ref{rec}]
We claim that the set $E$ of escaping orbits is the transient component of the system. Hence Theorem \ref{null} implies Corollary~\ref{rec}.

Indeed, the complement of $E$ is $\cup E_N$ where
\[
E_N = \{ (t_0,v_0): \liminf v_n \le N \}.
\]
For any $N\in\mathbb{N}$, $E_N$ is invariant and all points in $E_N$ will visit the set $V_N=\{ v \le N+1 \}$.\\
Suppose $A$ is a subset of $E_N$ with finite measure. For any $x\in A$, we denote the first hitting time in $V_N$ as $r(x)=\min\{k\ge0: F^k x\in V_N\}$. Now for any $K\in\mathbb{N}$, we consider $$A_K:=\bigcup_{x\in A:r(x)\le K}F^{r(x)}x$$
To show recurrence in $A$, it suffices to prove that almost every point in $A_K$ visits itself infinitely often since if for $x\in A_K$ $F^nx\in A_K$ for some $n>K$, then there exists some $x'\in A$ such that $F^{n-r(x')}x=x'\in A$.\\
However $A_K\subseteq E_N\cap V_N$ by definition of $A_K$ and the invariance of $E_N$. All points in $E_N$ visit $V_N$, thus the first return map $P$ on $A_K$ is well-defined. Now our goal is achieved by applying Poincar\'e recurrence theorem to $(A_K,P)$.
\end{proof}

\section{Statistical Properties of the Limit Map}
In this section we prove Theorem \ref{exp}, Theorem \ref{clt} and Theorem \ref{ggm}. Throughout this section we assume the wall motion is admissible.

\subsection{Background.}
The proof of Theorem \ref{exp} uses a result of Chernov and Zhang in \cite{CZ} and the proof of Theorem \ref{clt} uses a result of Chernov in \cite{Ch}.
We first describe the class of hyperbolic symplectic maps considered in \cite{Ch,CZ} and then show that our map $\tilde{F}_{\infty}$ belongs to this class.

Let $T: M \to M$ be a $C^2$ diffeomorphism of a two dimensional Riemannian manifold $M$ with singularities $\mathcal{S}$. Suppose $T$ satisfies the following conditions: 

\begin{enumerate}
   \item \emph{Uniform hyperbolicity of $T$}. There exist two continuous families of unstable cones $\mathcal{C}_x^u$ and stable cones $\mathcal{C}_x^s$ in the tangent spaces $\mathcal{T}_x M$ for all $x\in M$, and $\exists$ a constant $\Lambda>1$ such that
   
        \begin{enumerate}
            \item $DT(\mathcal{C}_x^u) \subset \mathcal{C}_{Tx}^u$, and $DT(\mathcal{C}_x^s) \supset \mathcal{C}_{Tx}^s$ whenever $DT$ exists;
            \item $|| D_x T \rm{v} || \ge \Lambda || \rm{v} ||$ $\forall \rm{v} \in \mathcal{C}_x^u$, and $|| D_x T^{-1} \rm{v} || \ge \Lambda || \rm{v} ||$ $\forall \rm{v} \in \mathcal{C}_x^s$;
            \item The angle between $\mathcal{C}_x^u$ and $\mathcal{C}_x^s$ is uniformly bounded away from zero.
         \end{enumerate}

   \item \emph{Singularities $\mathcal{S}^{\pm}$ of $T$ and $T^{-1}$}. The singularities $\mathcal{S}^{\pm}$ have the following properties:
   
         \begin{enumerate}
             \item $T: M\backslash \mathcal{S}^+ \to M\backslash \mathcal{S}^-$ is a $C^2$ diffeomorphism;
             
             \item $\mathcal{S}_0 \cup \mathcal{S}^+$ is a finite or countable union of smooth compact curves in $M$;
             
             \item Curves in $\mathcal{S}_0$ are transverse to the stable and unstable cones. Every smooth curve in $\mathcal{S}^+$ ($\mathcal{S}^-$) is a stable (unstable) curve. Every curve in $\mathcal{S}^+$ terminates either inside another curve of $\mathcal{S}^+$ or on $\mathcal{S}_0$;
             
             \item $\exists \ \beta \in (0,1)$ and $c>0$ such that for any $x \in M \backslash \mathcal{S}^+$, $|| D_x T|| \le c d(x, \mathcal{S}^+)^{-\beta}$.
         \end{enumerate}

    \item \emph{Regularity of smooth unstable curves.} We assume there exists a $T$-invariant class of unstable curves $W$ such that
    
         \begin{enumerate}
              \item \emph{Bounded curvature.} The curvature of $W$ is uniformly bounded from above;
              
              \item \emph{Distortion control.} $\exists \ \gamma \in (0,1)$ and $C>1$ such that for any regular unstable curve $W$ and any $x,y \in W$
\[
\big| \log \mathcal{J}_W(x) - \log \mathcal{J}_W(y) \big| \le C d(x,y)^{\gamma},
\]
where $\mathcal{J}_W(x) = |D_x T|_W|$ denotes the Jacobian of $T$ at $x\in W$;

              \item \emph{Absolute continuity of the holonomy map.} Let $W_1, W_2$ be two regular unstable curves that are close to each other. We denote 
\[
W_i' = \{ x\in W_i: W^s(x) \cap W_{3-i} \ne \emptyset \}, \ \ i=1,2.
\]
The holonomy map $h: W_1' \to W_2'$ is defined by sliding along the stable manifold. We assume that $h_{\star}\mu_{W_1'}\ll\mu_{W_2'}$ and that for some constant $C$ and $\vartheta<1$
\[
\big| \log \mathcal{J}h(x) - \log \mathcal{J}h(y) \big| \le C \vartheta^{s_+(x,y)}, \quad x,y \in W_1'
\]
where $\mathcal{J}h$ is the Jacobian of $h$;
       \end{enumerate}

   \item \emph{SRB measure}. $\tilde{\mu}$ is an SRB measure, i.e. the induced measure $\tilde{\mu}_{W^u}$ on any unstable manifold $W^u$ is absolutely continuous with respect to $Leb_{W^u}$. We also assume that $\tilde{\mu}$ and is mixing.

   \item \emph{One-step expansion.} Let $\xi^n$ denote the partition of $M$ into connected components of $M\backslash \mathcal{S}_n^+$. Denote as $V_{\alpha}$ the connected component of $TW$ with index $\alpha \in M/\xi^1$ and $W_{\alpha} = T^{-1}V_{\alpha}$. $\exists \ q\in (0,1]$ such that 
\[
\liminf_{\delta \to 0} \sup_{W: |W| < \delta} \sum_{\alpha \in M/ \xi^1} \bigg( \frac{|W|}{|V_{\alpha}|} \bigg)^q \frac{|W_{\alpha}|}{|W|} < 1,
\]
where the supremum is taken over all unstable curves $W$.
\end{enumerate}

\begin{thm}[\cite{Ch,CZ}]\label{thmcz}
Under the assumptions 1-5 above, the system $(T,M)$ above enjoys exponential decay of correlations and central limit theorem for dynamically H\"older continuous observables.
\end{thm}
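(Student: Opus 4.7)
The plan is to follow the coupling argument of Chernov--Zhang, which itself extends Young's tower construction to the nonuniformly hyperbolic setting with singularities. Since Theorem \ref{thmcz} is an abstract result about any map $T$ satisfying hypotheses (1)--(5), the proof does not use any specific feature of $\tilde F_\infty$; rather, it builds a Markov-type structure from the hyperbolicity and distortion data alone.

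First I would set up the geometric machinery of unstable manifolds. Using the uniform hyperbolicity in (1), the Hadamard--Perron theorem together with the singularity control (2)(d) (which ensures that preimages of $\mathcal{S}^+$ do not accumulate too fast near typical points) produces stable and unstable manifolds $W^{s,u}(x)$ of positive length for $\tilde\mu$-a.e. $x$. The distortion bound (3)(b) and bounded curvature (3)(a) imply that the family of regular unstable curves is invariant up to subdivision, and the absolute continuity of the holonomy map (3)(c) together with the SRB property (4) ensures that conditional measures on unstable manifolds are equivalent to arclength with dynamically H\"older densities. The main analytic input at this stage is the \emph{growth lemma}: iterating the one-step expansion (5) and summing over the partition $\xi^n$ yields a constant $\theta<1$ and $C>0$ such that for every unstable curve $W$,
\begin{equation*}
  \sum_{\alpha\in M/\xi^n}\Bigl(\frac{|W|}{|V_\alpha|}\Bigr)^q\frac{|W_\alpha|}{|W|}\le C\theta^n+\varepsilon_n(|W|),
\end{equation*}
where $\varepsilon_n(|W|)\to0$ as $|W|\to\infty$. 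This quantifies the fact that, in the long run, most pieces of $T^n W$ are long.

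Next I would construct the coupling. Following Chernov, introduce \emph{standard pairs} $(W,\rho)$ consisting of a regular unstable curve $W$ and a dynamically H\"older density $\rho$, and show using (3)(b) that the class of standard pairs is invariant under $T$ up to partitioning: $T_*(W,\rho)$ decomposes into a family of standard pairs indexed by the connected components of $TW\setminus\mathcal{S}_1^-$, with the push-forward weights summing correctly. Pick a fixed ``magnet'' rectangle $R$ with product structure of stable and unstable manifolds and positive $\tilde\mu$-measure (the Pesin set of hyperbolic points has full measure by (1) and (4), so such $R$ exists). The growth lemma guarantees that for any initial standard pair, a positive fraction of the mass of $T^n(W,\rho)$ lands in $R$ as fully crossing unstable curves, with the shortfall decaying as $\theta^n$. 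At each such encounter one can couple a definite fraction of the mass of two standard pairs along matching stable manifolds; the holonomy regularity (3)(c) ensures that the coupled pieces differ only by a dynamically H\"older factor. Iterating yields a coupling scheme in which the uncoupled mass decays exponentially, giving
\begin{equation*}
  \Bigl|\int (\varphi\circ T^n)\,\phi\,d\tilde\mu-\int\varphi\,d\tilde\mu\int\phi\,d\tilde\mu\Bigr|\le C_{\varphi,\phi}\,e^{-bn}
\end{equation*}
for dynamically H\"older $\varphi,\phi$.

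The central limit theorem is then deduced by the standard Gordin martingale-approximation argument: exponential decay of correlations implies that $\sigma_\varphi^2=\sum_{n\in\mathbb Z}\int\varphi(\varphi\circ T^n)d\tilde\mu$ converges absolutely, and $\sum_{k\ge 0}(\varphi\circ T^k-\tilde\mu(\varphi\circ T^k\mid\mathcal F))$ converges in $L^2$ with respect to a suitable decreasing filtration $\mathcal F$, producing a coboundary decomposition $\varphi=\psi+\chi\circ T-\chi$ with $\psi$ a reverse martingale difference. The CLT for $\psi$ follows from the Billingsley--Ibragimov theorem for stationary martingale differences, and the coboundary $\chi\circ T-\chi$ contributes nothing after normalization by $\sqrt n$.

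The main obstacle is the coupling step: one must keep track of three competing exponential scales simultaneously (expansion along unstable curves, stable contraction for the holonomy, and the probability of falling into the magnet $R$), and verify that the bound (2)(d) on $\|DT\|$ near singularities is weak enough that the fraction of standard pairs cut by $\mathcal{S}^+$ in each step is summable against the growth lemma. This is precisely where the one-step expansion (5) is essential, and the quantitative form of (5) is what forces $b>0$ in the final estimate.
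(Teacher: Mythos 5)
This theorem is not proved in the paper at all: it is imported verbatim from Chernov and Chernov--Zhang, and the paper's ``proof'' consists of the citation \cite{Ch,CZ} followed by a verification (elsewhere) that $\tilde F_\infty$ satisfies hypotheses (1)--(5). Your sketch is a faithful outline of exactly how those references prove it --- growth lemma from the one-step expansion, standard pairs, coupling at a magnet rectangle, and Gordin's martingale approximation for the CLT --- so it takes essentially the same route as the source the paper relies on, though of course the coupling lemma itself would require substantially more detail to be a complete proof.
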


The verifications of Assumptions 1-5 are rather long. Moreover, their validity  is of independent importance themselves. So we first state intermediary lemmata in Section 6.2, then we prove, based on these lemmata, Theorem \ref{exp}, Theorem \ref{clt} and Theorem \ref{ggm} in Section 6.3, and finally we prove all the lemmata in Section 6.4.

\subsection{Intermediary Lemmas}
In this section we list the intermediate lemmata. Their proofs are presented in Section 6.4.

Suppose $W$ is an unstable curve, i.e. the tangent line of $W$ lies in the unstable cone $\mathcal{C}^u$, with bounded curvature. We assume without loss of generality that $W \cap \mathcal{S}^+ = \emptyset$. Let $\mathcal{J}_W(x) = |D_x T|_W|$ denote the Jacobian of $\tilde{F}_{\infty}$ at $x\in W$. We have the following enhanced distortion control:
\begin{lemma}[Distortion Control]\label{distortion}
Suppose $W$ is an unstable curve with bounded curvature. Then for any $x,y\in W$, there exists a constant $C$ which depends only on $\tilde{F}_{\infty}$ such that
\[
\big| \log \mathcal{J}_W(x) - \log \mathcal{J}_W(y) \big| \le C d(x,y).
\]
Furthermore, if $W \cap \mathcal{S}_N^- = \emptyset$, then for any $1 \le n \le N$ there exists a constant $C'$ which depends only on $\tilde{F}_{\infty}$ such that
\[
\big| \log \mathcal{J}_W \tilde{F}_{\infty}^{-n} (x) - \log \mathcal{J}_W \tilde{F}_{\infty}^{-n} (y) \big| \le C' |W|.
\]
\end{lemma}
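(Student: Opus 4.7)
The plan is to handle the two inequalities separately, using a single-step smoothness computation for the first and a chain-rule summation combined with uniform contraction for the second.

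For the first inequality, I would write the Jacobian as $\mathcal{J}_W(x)=\|d_x\tilde{F}_\infty\cdot\bm{\tau}(x)\|$ with $\bm{\tau}(x)$ the unit tangent to $W$ at $x$. The entries of $d_x\tilde{F}_\infty$ displayed in Section~3.2 are polynomial expressions in $k(\tilde{t}_1(x))=\ddot{f}(\tilde{t}_1(x))$, which are $C^1$ along $W$ since $f\in C^3$ and $W\cap\mathcal{S}^+=\emptyset$. The unit tangent $\bm{\tau}(x)$ is Lipschitz along $W$ by the bounded-curvature hypothesis, and the uniform hyperbolicity established in Section~4 keeps $\|d_x\tilde{F}_\infty\cdot\bm{\tau}(x)\|$ uniformly bounded above and bounded away from zero. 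Composing with $\log$ then produces a Lipschitz function on $W$ with constant $C$ depending only on $\|f\|_{C^3}$, the uniform curvature bound, and the hyperbolicity constant $\Lambda$.

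For the second inequality, set $W_j=\tilde{F}_\infty^{-j}W$, $x_j=\tilde{F}_\infty^{-j}x$, $y_j=\tilde{F}_\infty^{-j}y$. The hypothesis $W\cap\mathcal{S}_N^-=\emptyset$ ensures each $W_j$ is a well-defined smooth curve for $j\le N$, and the chain rule gives
\[
\log\mathcal{J}_W\tilde{F}_\infty^{-n}(x)-\log\mathcal{J}_W\tilde{F}_\infty^{-n}(y)=-\sum_{j=1}^{n}\bigl(\log\mathcal{J}_{W_j}(x_j)-\log\mathcal{J}_{W_j}(y_j)\bigr).
\]
Reading the map forward, $\tilde{F}_\infty:W_j\to W_{j-1}$ expands the tangents of $W_j$ by a factor $\ge\Lambda>1$, so telescoping yields $d_{W_j}(x_j,y_j)\le\Lambda^{-j}d_W(x,y)\le\Lambda^{-j}|W|$. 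Applying the first inequality on each $W_j$ and summing the geometric series then gives
\[
\sum_{j=1}^{n}\bigl|\log\mathcal{J}_{W_j}(x_j)-\log\mathcal{J}_{W_j}(y_j)\bigr|\le C|W|\sum_{j=1}^{n}\Lambda^{-j}\le\frac{C}{\Lambda-1}|W|=:C'|W|.
\]

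The main technical obstacle is ensuring that the Lipschitz constant of the first inequality can be taken uniformly for every backward iterate $W_j$. This requires (i) a uniform bound on the curvatures of the $W_j$, and (ii) that the tangent directions of each $W_j$ lie in a cone on which $\tilde{F}_\infty$ expands by at least $\Lambda$, so the geometric telescoping of distances above is justified. Both are standard invariant-family arguments in hyperbolic dynamics, relying on the uniform $C^2$ bound on $\tilde{F}_\infty$ coming from $\ddot{f}\in L^\infty$ and the strict cone invariance established in Section~4; once these are in place, the geometric sum above produces the stated constant $C'$, which depends only on $\tilde{F}_\infty$.
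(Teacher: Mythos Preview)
Your approach is essentially the same as the paper's: an explicit single-step Lipschitz bound on $\log\mathcal{J}_W$ followed by a chain-rule telescoping and a geometric sum over backward iterates. The one substantive point where the paper goes further is exactly the ``main technical obstacle'' you flag but defer: the paper does not invoke a generic invariant-family argument, but instead computes the slope and curvature recursion explicitly, obtaining
\[
\psi_m''=2\dddot{f}(t_m)-\frac{\psi_{m-1}''}{\bigl(1+\tfrac{2}{g}\psi_{m-1}'\bigr)^3},
\qquad
|\psi_m''|\le\frac{2\dddot{f}_{\max}}{1-\theta}+\theta^m|\psi_0''|,\quad \theta=\frac{1}{(1+4k_{\min}/g)^3}<1.
\]
This explicit bound is what makes the constant $C$ in the first inequality genuinely uniform across all the $W_j$, and the paper actually cites this formula later (in the verification of Assumption~3 for Theorem~\ref{exp}), so it is worth writing out rather than treating as folklore. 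Apart from this, your outline matches the paper's proof.
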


In order to establish the $N_0$-step expansion, we need the following estimate on the speed of fragmentation of unstable curves:
\begin{lemma}[Complexity Bound]\label{complexity}
Suppose $z$ is a multiple point of $\mathcal{S}_n^+$. Pick a small neighborhood of $z$ and denote as $k_n(z)$ the number of sectors in the small neighborhood cut out by $\mathcal{S}_n^+$. Then $k_n(z) \le 6n$.
\end{lemma}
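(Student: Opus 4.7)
The plan is to proceed by induction on $n$, using the recursive structure $\mathcal{S}_{n+1}^+ = \mathcal{S}^+ \cup \tilde{F}_{\infty}^{-1}\mathcal{S}_n^+$. The base case $n=1$ is immediate: $\mathcal{S}^+ = \{\tilde{t}_0 + 2\tilde{v}_0/g \equiv 0 \pmod 1\}$ is a collection of parallel lines of common slope $-g/2$ (in fact a single simple closed geodesic on the torus), so it has no self-intersections, no multiple points, and $k_1(z) \le 2$ vacuously.

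For the inductive step, assume $k_m(z') \le 6m$ for all $z' \in \mathbb{T}$ and all $m \le n$, and let $z$ be a multiple point of $\mathcal{S}_{n+1}^+$. If $z \notin \mathcal{S}^+$, the map $\tilde{F}_{\infty}$ is a local diffeomorphism at $z$ and identifies the sector structure of $\mathcal{S}_{n+1}^+$ at $z$ with that of $\mathcal{S}_n^+$ at $\tilde{F}_{\infty}(z)$, yielding $k_{n+1}(z) = k_n(\tilde{F}_{\infty}(z)) \le 6n$. If $z \in \mathcal{S}^+$, the curve $\mathcal{S}^+$ splits a small disk around $z$ into two half-disks $\sigma^\pm$, and $\tilde{F}_{\infty}$ extends on each side to a diffeomorphism onto a half-disk at $z_\pm := \lim_{\sigma^\pm} \tilde{F}_{\infty}$, bounded by a segment of the vertical line $\{\tilde{t}=0\} = \tilde{F}_{\infty}(\mathcal{S}^+)$. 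The sub-sectors of $\sigma^\pm$ in $\mathcal{S}_{n+1}^+$ then pull back from the sub-sectors produced by $\mathcal{S}_n^+$ inside the image half-disk at $z_\pm$.

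The key geometric input is that in the admissible regimes the stable cone is strictly transverse to the vertical direction $\{\tilde{t}=0\}$: a direct check using $\ddot{f} \ne -g/2$ (forced by $\ddot{f}>0$ or $\ddot{f}<-g$) shows that iterated preimages under $d\tilde{F}_{\infty}^{-1}$ of the slope $-g/2$ of $\mathcal{S}^+$ never become vertical. Consequently, each smooth branch of $\mathcal{S}_n^+$ through $z_\pm$ crosses the boundary of the image half-disk transversally and contributes exactly one ray inside it. So the number of rays in the image half-disk at $z_\pm$ is at most $k_n(z_\pm)/2 \le 3n$, giving at most $3n+1$ sub-sectors in each $\sigma^\pm$, and hence
\[
k_{n+1}(z) \le 2(3n+1) = 6n + 2 \le 6(n+1).
\]

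The main obstacle will be carefully bookkeeping the rays that come from \emph{endpoint} branches of $\mathcal{S}_n^+$ at $z_\pm$---those arising when earlier-generation curves $\tilde{F}_{\infty}^{-j}\mathcal{S}^+$ (for $j<n$) accumulate at points of $\mathcal{S}^+$. Each such endpoint branch contributes at most one ray to one image half-disk, so a parallel generation-by-generation ray count keeps the bound within the factor-of-three margin between the naive count $2n$ and the stated $6n$, and the induction closes.
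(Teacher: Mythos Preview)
Your inductive framework is sound, but the crucial halving step is not justified. You claim that the number of rays of $\mathcal{S}_n^+$ in the image half-disk at $z_\pm$ is at most $k_n(z_\pm)/2$, arguing that each smooth branch, being transverse to the vertical boundary $\{\tilde t=0\}$, contributes exactly one ray to each side. But transversality only says that no ray is vertical; it does \emph{not} control how the rays are distributed between the two sides. For smooth branches your count is fine, but you yourself flag the problem: endpoint branches (curves of $\tilde F_\infty^{-j}\mathcal{S}^+$ terminating on lower-order singularities at $z_\pm$) contribute a single ray that can land entirely in one half-disk. Nothing prevents all such rays from accumulating on the same side, in which case the half-disk carries up to $k_n(z_\pm)$ rays. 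Your induction then gives only $k_{n+1}(z)\le 2k_n+2$, which blows up exponentially. The final paragraph (``a parallel generation-by-generation ray count keeps the bound within the factor-of-three margin'') is a hope, not an argument; you would need a separate inductive hypothesis bounding rays-per-half-plane, but that hypothesis does not propagate because $\tilde F_\infty$ does not send vertical lines to vertical lines.

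The paper's proof supplies the missing dynamical mechanism. Instead of half-disks it works in \emph{quadrants}: since all singularity curves have negative slope, only the NW and SE quadrants are ``active''. When $z\in\mathcal{S}^+$, the quadrant splits into two sectors $V_1,V_2$, and the key observation is that the admissibility condition (e.g.\ $\ddot f>0$, so $\dot f$ is increasing) forces one of the images $\tilde F_\infty(V_i)$ to land in an \emph{inactive} quadrant at $z_i'$. That sector cannot be further subdivided by $\mathcal{S}_{n-1}^+$ and contributes a single sector. Only the remaining image stays active, giving the recursion $k_n(z)|_Q \le 3 + k_{n-1}(z_i')|_{Q_i'}$, hence $k_n(z)|_Q\le 3n$ per active quadrant and $k_n(z)\le 6n$ overall. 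This ``one image goes inactive'' phenomenon is what replaces your unjustified halving, and it genuinely uses the sign of $\ddot f$, not merely transversality.
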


The linear complexity bound guarantees that a sufficiently short unstable curve $W$ can break into at most $6n$ connected components under $\tilde{F}_{\infty}^n$. Thus there exists $\delta_0$ so small that any unstable curve shorter than $\delta_0$ satisfies the following expansion estimate:
\begin{lemma}[$N_0$-Step Expansion]\label{nstep}
Suppose that $W$ is an unstable curve with length $|W|\le\delta_0$ and that $\{W_i^n\}_i$ are the connected components of the image $\tilde{F}_{\infty}^nW$. Denote as $\Lambda_{i,n}$ the minimum rate of expansion on each preimage $\tilde{F}_{\infty}^{-n}W_i^n$. Then 
$$\sum_i \frac{1}{\Lambda_{i,N_0}} < 1,$$
where $N_0$ is the smallest integer such that $\displaystyle \frac{6N_0}{\Lambda^{N_0}}<1$ and $\Lambda$ is the expansion rate of $\tilde{F}_{\infty}$.
\end{lemma}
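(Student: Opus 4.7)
The plan is to combine the complexity bound with uniform hyperbolicity: choose $\delta_0$ so small that any admissible unstable curve $W$ of length at most $\delta_0$ breaks into at most $6N_0$ pieces under $\tilde{F}_{\infty}^{N_0}$, and then use the uniform expansion rate $\Lambda$ to conclude each piece expands by at least $\Lambda^{N_0}$.

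First I would fix the set $\mathcal{S}_{N_0}^+$, which by the observation at the end of Section 3.3 (applied inductively to $\mathcal{S}_{N_0}^+=\bigcup_{j=0}^{N_0-1}\tilde{F}_{\infty}^{-j}\mathcal{S}^+$) is a finite union of smooth compact stable curves with only finitely many multiple points $z_1,\dots,z_m$. Around each $z_i$ pick an open neighborhood $U_i$ on which $\mathcal{S}_{N_0}^+$ has exactly the $k_{N_0}(z_i)$ branches meeting at $z_i$ and nothing else; away from $\bigcup_i U_i$, the set $\mathcal{S}_{N_0}^+$ is a finite collection of disjoint smooth compact arcs, separated from each other and from $\partial U_i$ by some uniform distance $\eta>0$. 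Choose $\delta_0$ smaller than $\eta$ and smaller than the radii of the $U_i$. Then any unstable curve $W$ with $|W|\le\delta_0$ either (a) lies entirely in some $U_i$, or (b) meets at most one smooth arc of $\mathcal{S}_{N_0}^+$.

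Next I would count components. In case (b), since $W$ is unstable and $\mathcal{S}_{N_0}^+$ is stable (by the proper alignment and invariance of the stable cone), $W$ is cut into at most $2$ pieces. In case (a), each connected component of $W\setminus\mathcal{S}_{N_0}^+$ lies in a distinct sector at $z_i$, so by Lemma \ref{complexity} the number of components is at most $k_{N_0}(z_i)\le 6N_0$. Since $N_0\ge 1$, in both cases the number of connected components of $W\setminus\mathcal{S}_{N_0}^+$ is bounded by $6N_0$, and these are exactly the preimages $\tilde{F}_{\infty}^{-N_0}W_i^{N_0}$.

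Finally, by uniform hyperbolicity of $\tilde{F}_{\infty}$ (Assumption 1 of Section 6.1, with one-step expansion constant $\Lambda>1$), the chain rule along an unstable curve gives $\Lambda_{i,N_0}\ge\Lambda^{N_0}$ for every $i$. Summing,
\[
\sum_i \frac{1}{\Lambda_{i,N_0}}\le \frac{6N_0}{\Lambda^{N_0}}<1,
\]
by the defining property of $N_0$. The main obstacle is really just Step 1, the careful choice of $\delta_0$; everything else is a direct application of the two preceding lemmata together with uniform hyperbolicity. One minor subtlety to spell out is that smooth arcs of $\mathcal{S}_{N_0}^+$ outside the $U_i$ are genuinely separated from one another (so that a curve of length $\le\delta_0$ cannot meet two of them), which follows from compactness and the fact that $\mathcal{S}_{N_0}^+$ has finitely many self-intersections, all of which have been absorbed into $\bigcup_i U_i$.
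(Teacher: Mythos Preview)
Your proposal is correct and follows the same approach the paper takes. In fact, the paper does not give a standalone proof of this lemma in Section~6.4 at all; it simply remarks, in the sentence immediately preceding the lemma statement, that ``the linear complexity bound guarantees that a sufficiently short unstable curve $W$ can break into at most $6n$ connected components under $\tilde{F}_{\infty}^n$,'' and leaves the rest implicit. Your argument spells out exactly the compactness reasoning needed to choose $\delta_0$ and then combines the complexity bound with the uniform expansion $\Lambda_{i,N_0}\ge\Lambda^{N_0}$, which is precisely what the paper intends.
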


Next, we suppose that $W$ and $\bar{W}$ are two unstable curves with bounded curvatures. We define the following holonomy map $h$ on $$W'=\{x\in W:W^s(x)\cap\bar{W}\ne\emptyset\}$$ by sliding along the stable manifold from $x\in W$ to $\bar{x}\in\bar{W}$. Then $h:W'\to\bar{W}'$ is absolutely continuous with well-behaving density:
\begin{lemma}[Absolute Continuity]\label{holonomy}
Suppose $W$ and $\bar{W}$ are two unstable curves with bounded curvatures. Then $h_{\star}\mu_{W}\ll\mu_{\bar{W}}$ and that for some constant $C$ and $\Theta<1$
\[
\big| \log \mathcal{J}h(x) - \log \mathcal{J}h(y) \big| \le C \Theta^{s_+(x,y)}, \quad x,y \in W'
\]
where $\mathcal{J}h$ is the Jacobian of $h$.
\end{lemma}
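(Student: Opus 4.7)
The plan is to follow the standard telescoping approach used throughout hyperbolic dynamics: express $\log\mathcal{J}h$ as an absolutely convergent series over forward iterates and control each term using Lemma~\ref{distortion} together with uniform hyperbolicity and strict cone invariance.

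First I would derive the identity
$$\mathcal{J}h(x)\;=\;\prod_{n=0}^{\infty}\frac{\mathcal{J}_{W_n}\tilde{F}_{\infty}(x_n)}{\mathcal{J}_{\bar{W}_n}\tilde{F}_{\infty}(\bar{x}_n)},$$
where $x_n=\tilde{F}_{\infty}^n x$, $\bar{x}_n=\tilde{F}_{\infty}^n h(x)$, and $W_n,\bar{W}_n$ denote the forward images of small neighborhoods of $x,\bar{x}$ in $W,\bar{W}$. This follows by pushing forward via $\tilde{F}_{\infty}^N$, using that $h$ conjugates to the analogous holonomy $h_N$ between $W_N$ and $\bar{W}_N$, and noting $\mathcal{J}h_N(x_N)\to 1$: the stable distance $d^s(x_N,\bar{x}_N)$ decays exponentially by hyperbolicity, while stable and unstable cones stay uniformly transverse, so $h_N$ becomes asymptotically an identification between nearly parallel unstable curves at distance zero.

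To guarantee convergence of the product, I would bound each logarithmic summand by
$$\bigl|\log\mathcal{J}_{W_n}\tilde{F}_{\infty}(x_n)-\log\mathcal{J}_{\bar{W}_n}\tilde{F}_{\infty}(\bar{x}_n)\bigr|\;\le\; C\bigl(d(x_n,\bar{x}_n)+\angle(T_{x_n}W_n,T_{\bar{x}_n}\bar{W}_n)\bigr).$$
The point distance is $\le\Lambda^{-n}d(x,\bar{x})$ from stable contraction, and the tangent-angle decays geometrically from strict forward invariance of the unstable cone, which drives tangent directions of both iterated curves toward the asymptotic unstable subspace at a uniform rate. Summing yields $|\log\mathcal{J}h(x)|<\infty$, which in turn gives $h_{\star}\mu_W\ll\mu_{\bar{W}}$.

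For the H\"older regularity, fix $x,y\in W'$ with $s_+(x,y)=m$ and split
$$\log\mathcal{J}h(x)-\log\mathcal{J}h(y)=\sum_{n<m/2}\Delta_n\;+\;\sum_{n\ge m/2}\Delta_n.$$
For $n<m/2$ the four points $x_n,y_n,\bar{x}_n,\bar{y}_n$ all belong to a common smooth component of $\tilde{F}_{\infty}^{m/2}$; applying the backward part of Lemma~\ref{distortion} on $W_{m/2},\bar{W}_{m/2}$ bounds $|\Delta_n|$ by $C\Lambda^{-(m/2-n)}$, while for $n\ge m/2$ the estimate of the previous paragraph gives $|\Delta_n|\le C\Lambda^{-n}$. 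Summing both geometric tails yields a bound $C\Theta^m$ with $\Theta=\Lambda^{-1/2}<1$, which is the desired H\"older inequality.

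The principal technical obstacle is the tangent-angle comparison: since $\mathcal{J}_W\tilde{F}_{\infty}$ at a point depends on the tangent line of $W$ there, comparing Jacobians across two distinct unstable curves $W_n,\bar{W}_n$ requires a quantitative rate at which their tangent directions at stably-equivalent points approach the asymptotic unstable direction. This is geometrically clear from the strict cone invariance proved in Section~4, but pinning down an exponential rate and verifying that it dovetails with the stable contraction rate used above is the most delicate step of the argument.
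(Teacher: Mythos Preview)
Your strategy is exactly the paper's: start from the classical product formula $\mathcal{J}h(x)=\prod_{j\ge0}\mathcal{J}_{W_j}(x_j)/\mathcal{J}_{\bar W_j}(\bar x_j)$, bound each logarithmic factor by a Lipschitz estimate in \emph{both} the base-point distance and the tangent-direction discrepancy, and then split the series for the H\"older bound. The paper carries out the tangent-angle step you flag as the ``principal technical obstacle'' explicitly, by parametrizing unstable curves as graphs $v=\psi(t)$ and deriving the recursion $|\psi'_j-\bar\psi'_j|\le C|t_j-\bar t_j|+\theta_3|\psi'_{j-1}-\bar\psi'_{j-1}|$ with $\theta_3=(1+4k_{\min}/g)^{-1}<1$ directly from the slope formula established earlier; this gives the geometric decay you need without appealing to abstract cone contraction. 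The paper splits the H\"older sum at $s_+(x,y)$ itself rather than at $s_+(x,y)/2$, but otherwise the two arguments coincide.

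There is one quantitative slip to fix. For $n<m/2$ you claim $|\Delta_n|\le C\Lambda^{-(m/2-n)}$, but summing this over $n<m/2$ gives only a constant, not $\Theta^m$. What you actually have is stronger: since $x,y$ remain in the same continuity component up to time $m$, the arc $|x_{m-1}-y_{m-1}|$ is bounded and hence $|x_n-y_n|\le C\Lambda^{-(m-n)}$ (not merely $\Lambda^{-(m/2-n)}$). With this sharper exponent, $\sum_{n<m/2}|\Delta_n|\le C\sum_{n<m/2}\Lambda^{n-m}\le C'\Lambda^{-m/2}$, and then both halves of your split are genuinely $O(\Theta^m)$. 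Also note that ``the backward part of Lemma~\ref{distortion}'' is not what delivers this bound; you want the first (one-step) part of that lemma applied on $W_n$ together with the distance estimate just described.
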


Finally we provide an estimate on the number of small unstable curves, which follows from Lemma 7 in \cite{CZ}.\\
Suppose $W$ is an unstable curve with length $|W|<\delta_0$. For any $x\in W$, we denote as $r_n(x)$ the distance from $x$ to the nearest boundary of the connected component of $\tilde{F}_{\infty}^n W$ containing $\tilde{F}_{\infty}^n x$.
\begin{lemma}[Growth Lemma]\label{growthlemma}
  Suppose $W$ is an unstable curve with length $|W|<\delta_0$. Then for any $\epsilon>0$, 
  $$\text{mes}_W\{r_{nN_0}(x)<\epsilon\}\le (\vartheta_1\Lambda^{N_0})^n \text{mes}_W
  \left\{r_0(x)<\frac{\epsilon}{\Lambda^{nN_0}}\right\}+C\epsilon|W|$$
  where $\vartheta_1=e^{C'\delta_0}\sum_i\frac{1}{\Lambda_{i,N_0}}<1$, $C'$ is the constant from Lemma \ref{distortion}, $N_0$ is the constant from Lemma \ref{nstep} and $\Lambda$ is the expansion rate of $\tilde{F}_{\infty}$.
\end{lemma}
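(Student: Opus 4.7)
The plan is to establish the bound by induction on $n$, with a one-step (i.e., $N_0$-step) estimate as the workhorse and careful bookkeeping of the boundary contributions at each iterate. The ingredients are already in place: Lemma \ref{nstep} gives the single-step contraction $e^{C'\delta_0}\sum_i 1/\Lambda_{i,N_0}=\vartheta_1<1$, and Lemma \ref{distortion} lets me pull back measures between an unstable curve and its image.

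\textbf{One-step estimate.} Let $\{W_i\}$ be the connected components of $W\setminus\mathcal{S}_{N_0}^+$, set $V_i=\tilde{F}_\infty^{N_0}W_i$, and let $\Lambda_{i,N_0}$ denote the least rate of expansion on $W_i$. Each $V_i$ is an unstable curve of bounded curvature, and a point $x\in W_i$ lies in $\{r_{N_0}<\epsilon\}$ precisely when $\tilde{F}_\infty^{N_0}x$ is within $\epsilon$ of $\partial V_i$. I split the endpoints of $V_i$ into \emph{inherited} ones (images of $\partial W$) and \emph{newly generated} ones (created by $\mathcal{S}_{N_0}^+$). The contribution of inherited endpoints is contained in $\{x:d(x,\partial W)<\epsilon/\Lambda^{N_0}\}$ by the uniform expansion $\Lambda^{N_0}$, hence has total measure at most $\text{mes}_W\{r_0<\epsilon/\Lambda^{N_0}\}$; a newly generated endpoint on $V_i$ corresponds via Lemma \ref{distortion} to a set in $W_i$ of measure at most $e^{C'\delta_0}\epsilon/\Lambda_{i,N_0}$. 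Summing over $i$ and invoking Lemma \ref{nstep} yields
\[
\text{mes}_W\{r_{N_0}<\epsilon\}\le\text{mes}_W\{r_0<\epsilon/\Lambda^{N_0}\}+2\epsilon\vartheta_1.
\]

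\textbf{Iteration.} To go from $n$ to $n+1$, I view $\tilde{F}_\infty^{nN_0}W$ as a finite union of unstable curves, subdividing any piece of length greater than $\delta_0$ into sub-arcs of length below $\delta_0$ so that the one-step estimate applies to each sub-arc. Pulling the resulting bound back to $W$ via Lemma \ref{distortion} and iterating, the inherited-boundary contributions across all $n$ stages collapse into the single term $(\vartheta_1\Lambda^{N_0})^n\text{mes}_W\{r_0<\epsilon/\Lambda^{nN_0}\}$: the factor $\Lambda^{nN_0}$ in the denominator is the total pullback of $\partial W$ through $n$ stages, while the prefactor accumulates the one-step factors $\vartheta_1=e^{C'\delta_0}\sum_i 1/\Lambda_{i,N_0}$ absorbed with the expansion $\Lambda^{N_0}$ at each stage. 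The newly generated boundary contributions from stages $1,\ldots,n$ form a geometric-type series in $\vartheta_1$ whose sum is bounded by a constant multiple of $\epsilon|W|$.

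\textbf{Main obstacle.} The delicate point is controlling the artificial cuts introduced whenever a component exceeds $\delta_0$ and showing their cumulative contribution is absorbed into the linear term $C\epsilon|W|$. At the $k$-th stage the total length of all pieces is at most $|W|\Lambda^{kN_0}$, so at most $O(|W|\Lambda^{kN_0}/\delta_0)$ artificial cuts appear; each pulls back to $W$ with mass $O(\epsilon/\Lambda^{kN_0})$, producing a $k$-th contribution of order $\epsilon|W|$. Summing via the geometric decay $\vartheta_1<1$ from Lemma \ref{nstep} yields the finite linear term $C\epsilon|W|$ and completes the induction.
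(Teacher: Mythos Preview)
The paper does not actually prove this lemma: the text preceding the statement simply records that it ``follows from Lemma~7 in \cite{CZ}'', and Section~6.4 contains no argument for it. Your inductive scheme (one $N_0$-step estimate plus iteration, using Lemma~\ref{nstep} for contraction and Lemma~\ref{distortion} for pullback) is exactly the standard argument behind that citation, so in effect you are supplying what the paper chose to outsource.

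That said, your ``main obstacle'' paragraph contains a real bookkeeping error. You claim the total length at stage $k$ is at most $|W|\Lambda^{kN_0}$ and that each artificial cut pulls back with mass $O(\epsilon/\Lambda^{kN_0})$, so the two factors cancel. But $\Lambda$ is the \emph{minimum} expansion rate: the total image length is only bounded above by $|W|\Lambda_{\max}^{kN_0}$, while the pullback bound genuinely needs $\Lambda_{\min}=\Lambda$. The product then carries $(\Lambda_{\max}/\Lambda)^{kN_0}$, which grows, and the sum over $k$ diverges.

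The fix is to cancel \emph{within each long piece} using Lemma~\ref{distortion} rather than globally. A piece $V$ of length $L>\delta_0$ with preimage $U\subset W$ receives $O(L/\delta_0)$ artificial cuts; their $\epsilon$-neighbourhoods have total $V$-length $O(\epsilon L/\delta_0)$, and the pullback Jacobian is $e^{\pm C'\delta_0}|U|/L$, so the contribution on $W$ is $O(\epsilon|U|/\delta_0)$ --- the $L$'s cancel. Summing over pieces gives a per-stage artificial contribution of $O(\epsilon|W|/\delta_0)$, independent of $k$. These endpoints then become \emph{inherited} at the next stage and enter the contracting part of the recursion, so the total over all stages is $O\bigl(\epsilon|W|/((1-\vartheta_1)\delta_0)\bigr)$, which is the constant $C$ in the statement. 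With this correction your argument goes through.
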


\begin{rmk}
  We note that $\vartheta_1$ can be made less than 1 by choosing $\delta_0$ sufficiently small.
\end{rmk}

\subsection{Exponential Decay of Correlations, CLT and Global Global Mixing}
In this section we present the proof of Theorem \ref{exp}, Theorem \ref{clt} and Theorem \ref{ggm}, based on the lemmas from Section 6.2.\\

We start with the proof for exponentially decay of correlations and CLT.
\begin{proof}[Proof of Theorem \ref{exp} and Theorem \ref{clt}]
Firstly we establish the exponential decay and CLT for $\tilde{F}_{\infty}^{N_0}$ by checking the conditions in Theorem \ref{thmcz} for $\tilde{F}_{\infty}^{N_0}$ where $N_0$ is the number from Lemma \ref{nstep}. Note that we gain from Theorem \ref{thmcz} the exponential decay and CLT for $\tilde{F}_{\infty}^{N_0}$ rather than for $\tilde{F}_{\infty}$ because we can only obtain $N_0$-step expansion on $\tilde{F}_{\infty}$.\\
The proof for the case $\ddot{f}>0$ is very similar to that for $\ddot{f}<-g$, and thence we omit the latter. Although the positive/negative cones in the proof of Theorem \ref{poserg} are strictly invariant, we cannot use them here since we require a positive angle between the unstable and stable cones. Instead, we consider their images, i.e. 
\[
\mathcal{C}^u (\tilde{t}_0, \tilde{v}_0) = \left\{2k_0 \le \frac{\delta v}{\delta t} \le 2k_0 + \frac{g}{2}\right\}
\]
\[
\mathcal{C}^s (\tilde{t}_0, \tilde{v}_0) = \left\{-\frac{g}{2} \le \frac{\delta v}{\delta t} \le -\frac{2k_0}{\frac{4k_0}{g} +1}\right\}.
\]
It is easy to see that the angles between $\mathcal{C}^u$ and $\mathcal{C}^s$ are uniformly bounded away from zero since $k_0>0$ is bounded.
and that these cones are strictly invariant.


We now compute the expansion rate $\Lambda$.
With the same notations as above, for $(\delta t, \delta v) \in \mathcal{C}^u (\tilde{t}_0, \tilde{v}_0)$ it follows from (\ref{expansionrate}) that 
\[
\bar{\delta t}^2 + \bar{\delta v}^2 \ge \Lambda_1^2 (\delta t^2 + \delta v^2)
\]
where $\displaystyle \Lambda_1^2 = \min \left\{1+4k_{min},\frac{4}{g^2} + \left(1+\frac{4k_{min}}{g}\right)^2\right\}>1$.\\
Similarly for $(\delta t, \delta v) \in \mathcal{C}^s (\tilde{t}_0, \tilde{v}_0)$
\begin{align*}
\tilde{\delta t}^2 + \tilde{\delta v}^2 
&= \left(4k_1^2 + \left(1+\frac{4k_1}{g}\right)^2\right) \delta t^2 + \left(\frac{4}{g^2} + 1\right) \delta v^2 - 2\left(2k_1 + \frac{2}{g}\left(1+\frac{4k_1}{g}\right)\right) \delta t \delta v \\
&\ge \Lambda_2^2 (\delta t^2 + \delta v^2)
\end{align*}
where $\displaystyle \Lambda_2^2 = \min\left\{4k_{min}^2 + \left(1+\frac{4k_{min}}{g}\right)^2, \frac{4}{g^2} + 1\right\} >1$.\\
Take $\Lambda = \min \{ \Lambda_1, \Lambda_2 \}$, and this gives our expansion rate.\\

Next we examine the singularity curves $\mathcal{S}^{\pm}$.\\
Observe that $\tilde{F}_{\infty}$ is a $C^2$-diffeomorphism away from singularities if $f$ is piecewise $C^3$. And $\mathcal{S}_0 \cup \mathcal{S}^+$ is a finite union of smooth compact curves on the torus $\mathbb{T}$. $\mathcal{S}_0$ is transverse to $\mathcal{C}^u/\mathcal{C}^s$. Moreover the singularity curves are regular and properly aligned as shown in the proof of Theorem \ref{poserg}.\\
Assumption 2(d) is trivially satisfied since the norm of the derivative $d\tilde{F}_{\infty}$ is bounded and our phase space is compact.\\

As for Assumption 3, we have already obtained distortion control in Lemma \ref{distortion} and absolute continuity of holonomy map in Lemma \ref{holonomy}. We note that by (\ref{boundedcurvature}) the curvature of an unstable curve remains bounded after iterations.\\

Next, the invariant measure $\tilde{\mu} =d\tilde{t}d\tilde{v}$ is apparently an SRB measure.\\
Note that $\tilde{F}_{\infty}^n$ is ergodic with respect to $\tilde{\mu}$ for any $n>0$, since $\tilde{F}_\infty^n$ also satisfies the conditions of Theorem \ref{thmlw}. Now
the results of \cite{Pes} imply that $\tilde{\mu}$ is mixing (even Bernoulli).\\

Finally, since we already establish $N_0$-step expansion from Lemma \ref{nstep}, we conclude from Theorem \ref{thmcz} that $\tilde{F}_{\infty}^{N_0}$ enjoys exponential decay of correlations and CLT for dynamically H\"older continuous observables.\\

The CLT for $\tilde{F}_{\infty}$ follows easily from that of $\tilde{F}_{\infty}^{N_0}$. Now we show that exponential mixing for $\tilde{F}_{\infty}^{N_0}$ implies exponential mixing for $\tilde{F}_{\infty}$.\\
Suppose $\varphi,\phi$ are two dynamically H\"older continuous observables. For any integer $n\in\mathbb{N}$, $n = pN_0 +q$ for some integers $p>0$ and $0 \le q<N_0$.\\
We denote as $\tilde{\varphi}_q = \varphi \circ \tilde{F}_{\infty}^q$. For any $x,y$ on a same unstable manifold $W^u$, 
\[
\big| \tilde{\varphi}_q(x) - \tilde{\varphi}_q(y) \big|
= \big| \varphi(\tilde{F}_{\infty}^q x) - \varphi(\tilde{F}_{\infty}^qy) \big|
\le C \vartheta^{-q_+} \vartheta^{s_+(x,y)}
\]
where $q_+ = \min \{ q, s_+(x,y) \}$.\\
On the other hand, for any $x,y$ on a same stable manifold $W^s$, 
\[
\big| \tilde{\varphi}_q(x) - \tilde{\varphi}_q(y) \big|
= \big| \varphi(\tilde{F}_{\infty}^q x) - \varphi(\tilde{F}_{\infty}^qy) \big|
\le C \vartheta^q \vartheta^{s_-(x,y)}.
\]
Therefore $\tilde{\varphi}_q$ is also dynamically H\"older.\\
By applying the previous exponential decay result on $\tilde{F}_{\infty}^{N_0}$ with the observables $\tilde{\varphi}_q,\phi$, we know that $\exists$ $C_{\tilde{\varphi}_q,\phi}$ and $b$ such that 
\begin{align*}
\bigg| \int_{\mathbb{T}} \big( \varphi \circ \tilde{F}_{\infty}^n \big) \phi d\tilde{\mu} - \int_{\mathbb{T}} \varphi d\tilde{\mu}\int_{\mathbb{T}} \phi d\tilde{\mu} \bigg|
&= \bigg| \int_{\mathbb{T}} \big(\tilde{\varphi}_q \circ \tilde{F}_{\infty}^{pN_0} \big) \phi d\tilde{\mu} - \int_{\mathbb{T}} \tilde{\varphi}_q d\tilde{\mu}\int_{\mathbb{T}} \phi d\tilde{\mu} \bigg|\\
&\le C_{\tilde{\varphi}_q,\phi} e^{-bp} = C_{\tilde{\varphi}_q,\phi} e^{bq/N_0} (e^{-b/N_0})^n.
\end{align*}
If we take $\displaystyle C_{\varphi,\phi} = \max_q \{ C_{\tilde{\varphi}_q,\phi} e^{bq/N_0} \}$ and replace $b$ with $b/N_0$, then we have proved exponential decay of correlation in the case $\ddot{f}>0$.
\end{proof}

Next we prove the global global mixing property for the original collision map $F$.
\begin{proof}[Proof of Theorem \ref{ggm}]
Under Assumptions 1-5, the limit map $\tilde{F}_{\infty}$ satisfies the conditions of \cite{CWZ}, thus it admits a Young tower with exponential tail. We recall from Section 3.2 that $\tilde{F}_{\infty}$ well approximates the original collision map $F$ at infinity. Therefore by Theorem 2.4 and Theorem 2.9 in \cite{DN18}, $F$ is global global mixing.
\end{proof}

\subsection{Proof of Intermediary Lemmas}
In this section we prove the lemmas stated in Section 6.2.

We start with the distortion control.
\begin{proof}[Proof of Lemma \ref{distortion}]
We parametrize the unstable W as $v=\psi (t)$ for some smooth function $\psi$ such that $\psi'(t) \in [2k,2k+g/2]$ and $\psi''$ is bounded.\\
For $x,y\in W$, $\displaystyle \left| \log \mathcal{J}_W(x) - \log \mathcal{J}_W(y) \right| \le \max_{z\in W} \left| \frac{d}{dz} \log \mathcal{J}_W(z) \right| |x-y|$.\\
For $z\in W$, we take $\rm{v} = (1,\psi'(t)) \in \mathcal{T}_zW$.

{\small $$
 \mathcal{J}_W(z) = 
 \frac{\left\Vert d_z \tilde{F}_{\infty} \rm{v} \right\Vert}{\left\Vert \rm{v} \right\Vert}=$$
$$\frac{1}{\sqrt{1+\psi'(t)^2}} \left(1+4k_1^2 + \psi'(t)^2 \left( \frac{4}{g^2} +\left(1+\frac{4k_1}{g}\right)^2 \right) + 2\psi'(t) \left( \frac{2}{g}+2k_1 \left(1+\frac{4k_1}{g}\right) \right) \right)^{1/2}$$}
where $k_1 = \ddot{f}(\tilde{F}_{\infty}z)$. Hence 

{\small
 $$
    \log \mathcal{J}_W(z) = $$
    $$ \frac{1}{2} \log \left( 1+4k_1^2 + \psi'(t)^2 \left( \frac{4}{g^2} +\left(1+\frac{4k_1}{g}\right)^2 \right) + 2\psi'(t) \left( \frac{2}{g}+2k_1 \left(1+\frac{4k_1}{g}\right) \right) \right) $$
   \begin{equation}\label{logjacobian}
   - \frac{1}{2} \log (1+\psi'(t)^2)
  \end{equation}}
We note that each term inside the logarithms in \eqref{logjacobian}
is greater than one and has bounded derivatives. Thus $\left\vert \frac{d}{dz}\log \mathcal{J}_W(z) \right\vert \le C$ for some constant $C$ depending only on $\tilde{F}_{\infty}$.\\
Besides the above distortion bound, we have the following enhanced estimate.\\
Now we assume further that $W \cap \mathcal{S}^-_n = \emptyset$.\\
We denote $x_n =\tilde{F}_{\infty}^{-n} x$, $y_n = \tilde{F}_{\infty}^{-n} y$ and $W_n = \tilde{F}_{\infty}^{-n} W$.
\begin{align*}
\left\vert \log \mathcal{J}_W \tilde{F}_{\infty}^{-n}(x) - \log \mathcal{J}_W \tilde{F}_{\infty}^{-n}(y) \right\vert 
&\le \sum_{m=0}^{n-1} \left\vert \log \mathcal{J}_{W_m} \tilde{F}_{\infty}^{-n}(x_m) - \log \mathcal{J}_{W_m} \tilde{F}_{\infty}^{-n}(y_m) \right\vert \\
&\le \sum_{m=0}^{n-1} |W_m| \max_{z_m \in W_m} \left\vert \frac{d}{dz_m} \log \mathcal{J}_{W_m}\tilde{F}_{\infty}^{-1}(z_m) \right\vert.
\end{align*}
But 
\begin{align*}
\frac{d}{dz_m} \log \mathcal{J}_{W_m}\tilde{F}_{\infty}^{-1}(z_m) 
&= \frac{dz_{m+1}}{dz_m} \frac{d}{dz_{m+1}} \log \frac{1}{\mathcal{J}_{W_{m+1}}\tilde{F}_{\infty}(z_{m+1})}\\
&= - \frac{1}{\mathcal{J}_{W_{m+1}}\tilde{F}_{\infty}(z_{m+1})} \frac{d}{dz_{m+1}} \log \mathcal{J}_{W_{m+1}}\tilde{F}_{\infty}(z_{m+1}).
\end{align*}
Observe that $\mathcal{J}_{W_{m+1}}\tilde{F}_{\infty}(z_{m+1})$ is bounded. Next
$$
    \frac{dv_m}{dt_m}
       = \frac{2k_mdt_{m-1}+(4k_m/g+1)dv_{m-1}}{dt_{m-1}+\frac{2}{g}dv_{m-1}}$$
       
  $$     =\frac{2k_m+(4k_m/g+1)\frac{dv_{m-1}}{dt_{m-1}}}{1+\frac{2}{g}\frac{dv_{m-1}}{dt_{m-1}}}
       =2k_m+g/2-\frac{g/2}{1+\frac{2}{g}\frac{dv_{m-1}}{dt_{m-1}}}.
$$
Therefore 
$$\psi_m''=2\dddot{f}(t_m)-\frac{\psi_{m-1}''}{\left(1+\frac{2}{g}\psi_{m-1}'\right)^3}$$ 
which  implies that
$$|\psi_m''|\le2\dddot{f}_{\max}+\theta|\psi_{m-1}''|$$ where $\theta:=\frac{1}{(1+4k_{\min}/g)^3}<1$. 
Iterating we obtain
\begin{equation}\label{boundedcurvature}
    |\psi_m''|\le\frac{2\dddot{f}_{\max}}{1-\theta}+\theta^m|\psi_0''|.
\end{equation}
Hence $\displaystyle \left\vert \frac{d}{dz_{m+1}} \log \mathcal{J}_{W_{m+1}}\tilde{F}_{\infty}(z_{m+1}) \right\vert$ is bounded. Thus
\[
\left\vert \log \mathcal{J}_W \tilde{F}_{\infty}^{-n}(x) - \log \mathcal{J}_W \tilde{F}_{\infty}^{-n}(y) \right\vert
\le C'' \sum_{m=0}^{n-1} |W_m|
\le C'' \sum_{m=0}^{n-1} \frac{|W|}{\Lambda^m} \le C' |W|. \qedhere
\]
\end{proof}

Next we prove the complexity bound following an approach of \cite{dST}.
\begin{proof}[Proof of Lemma \ref{complexity}]
Suppose $z$ is a multiple point of $\mathcal{S}_n^+$. We take a small neighborhood of $z$ and cut it into four quadrants $Q$'s by vertical and horizontal lines through $z$. Denote as $k_n(z)|_Q$ the number of sectors cut out by $\mathcal{S}_n^+$ intersecting nontrivially with $Q$.\\
We are only interested in the active quadrants, i.e. the quadrants in the northwest and southeast, because the tangent lines to the singularities curves $\mathcal{S}_n^+$ have negative slopes and the inactive quadrants (in the northeast and southwest) remain untouched by them and thus do not contribute to the complexity growth.\\
\begin{figure}[h!]
   \centering
        \begin{tikzpicture}
            \draw (-1.5,0) -- (1.5,0)   (0,-1.3)--(0,1.3);
            \draw (0,0) circle (1cm);
            \draw[red] (-1.4,1.2) .. controls (-0.7,0.2) and (0.7,-0.2) .. (1.4,-1.2) node[anchor=west]{$\mathcal{S}_n^+$};
            \draw (0,0.2) node[anchor=west]{$z$}
                      (0.8,0.8) node[anchor=west]{$Q$};
        \end{tikzpicture}
   \caption{A multiple point and its sectors}
\end{figure}
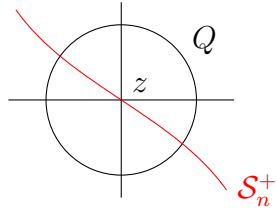

Denote as $\{ V_i \}$ the sectors cut out by $\mathcal{S}^+$. Note that $\mathcal{S}^+ = \{ \tilde{t}_1 =0\}$, hence there are at most two sectors cut out by $\mathcal{S}^+$ in a quadrant. By further cutting horizontally and vertically, we might assume each $V_i \subseteq Q$.\\
We denote as $V_i' = \tilde{F}_{\infty} (V_i)$, $z_i' = \tilde{F}_{\infty} (z_i)$ (this is defined by continuity), and $k_n(z)|_Q = \sum_i k_{n-1}(z_i')|{V_i'}$.\\
If $z \notin \mathcal{S}^+$, then $i=1$ and $k_n(z)|_Q = k_{n-1}(z')|{V'}$.\\
If $z \in \mathcal{S}^+$, then $i=2$ and we claim that at most one image $V_i'$ of the sectors $V_i$ remains active, so that in both cases we have 
\begin{align*}
k_n(z)|_Q &= \sum_i k_{n-1}(z_i')|{V_i'}\\
&\le 1+ k_{n-1}(z_i')|{V_i'} \hbox{   ($V_i'$ is the only active image)}\\
&\le 3+ k_{n-1}(z_i')|{Q_i'} \hbox{   (by further cutting $V_i'$ horizontally and vertically)}
\end{align*}
Thus $k_n(z)|_Q \le 3n$ implies $k_n(z) \le 6n$, which is our desired complexity bound.\\

Now we prove our claim. Suppose that $z\in \{ \tilde{t}_1 =0\}$.\\
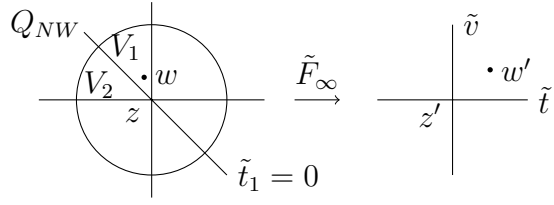
\begin{figure}[h!]
   \centering
       \begin{tikzpicture}
           \draw (-1.5,0) -- (1.5,0)   (0,-1.3)--(0,1.3)   (-0.9,0.9)--(1,-1) node[anchor=west]{$\tilde{t}_1 =0$};
           \draw (0,0) circle (1cm);
           \draw (0,-0.2) node[anchor=east]{$z$}
                     (-0.8,1) node[anchor=east]{$Q_{NW}$}
                     (-0.35,0.2) node[anchor=east]{$V_2$}
                     (0,0.7) node[anchor=east]{$V_1$};
           \draw[->] (1.9,0)--(2.5,0);
           \draw (2.2,0) node[anchor=south]{$\tilde{F}_{\infty}$};
           \draw (3,0)--(5,0) node[anchor=west]{$\tilde{t}$} 
                     (4,-1)--(4,1) node[anchor=west]{$\tilde{v}$};
           \draw (4,-0.2) node[anchor=east]{$z'$};
           \fill (4.5,0.4) circle[radius=1pt] node[anchor=west]{$w'$}
                 (-0.1,0.3) circle[radius=1pt] node[anchor=west]{$w$};
       \end{tikzpicture}
   \caption{Northwest quadrant for $z\in \{ \tilde{t}_1 =0\}$}\label{fig5}
\end{figure}

Recall that $\tilde{t}_1=\tilde{t}_0 + \frac{2\tilde{v}_0}{g}$ (mod 1), $\tilde{v}_1=\tilde{v}_0 + 2\dot{f}(\tilde{t}_1)$ (mod $g$).\\
Since $z\in \{ \tilde{t}_1 =0\}$, the $\tilde{t}$-coordinate of its images $z'$ is zero, i,e. $\tilde{t}(z_i')=0$ ($i=1,2$).\\
We pick $w\in V_1$ sufficiently close to $z$, then the $\tilde{t}$-coordinate of its image $w'$ is positive since $w$ is at the right side of the singularity line $\{ \tilde{t}_1 =0\}$. Also, since we assume $\ddot{f}>0$, $\dot{f}$ is increasing and hence the $\tilde{v}$-coordinate of its image $w'$ is larger than that of $z'$. This means that the image $V_1' = \tilde{F}_{\infty}(V_1)$ is inactive.\\
Similarly, we can show that the lower one to the left of the singularity line $\mathcal{S}^+$ in the southeast quadrant becomes inactive after being mapped by $\tilde{F}_{\infty}$.
\end{proof}

Finally we estimate the Jacobian of the holonomy map.
\begin{proof}[Proof of Lemma \ref{holonomy}]
It follows from classical results in \cite{AS,Si} that the holonomy map is absolutely continuous and its Jacobian is given by 
$$\mathcal{J}h(x)=\prod_{j=0}^{\infty}\frac{\mathcal{J}_{W_j}(x_j)}{\mathcal{J}_{\bar{W}_j}(\bar{x}_j)}$$
where $W_j/\bar{W}_j=\tilde{F}_{\infty}^j W/\tilde{F}_{\infty}^j\bar{W}$ and $x_j/\bar{x}_j=\tilde{F}_{\infty}^j x/\tilde{F}_{\infty}^j\bar{x}$.\\
As a result, 
$$\log\mathcal{J}h(x)=\sum_{j=0}^{\infty}\log\mathcal{J}_{W_j}(x_j)-\log\mathcal{J}_{\bar{W}_j}(\bar{x}_j).$$
Since $\psi'\in[2k,2k+\frac{g}{2}]$, we obtain by (\ref{logjacobian})
\begin{align*}
    &\quad\quad 2|\log\mathcal{J}_{W_j}(x_j)-\log\mathcal{J}_{\bar{W}_j}(\bar{x}_j)|\\
    &\le \bigg\vert\log\left(1+4k_{j+1}^2+\psi_j'^2\left(\frac{4}{g^2}+\left(1+\frac{4k_{j+1}}{g}\right)^2\right) + 2\psi_j' \left(\frac{2}{g}+2k_{j+1} \left(1+\frac{4k_{j+1}}{g}\right)\right)\right)\\
    &\quad-\log\left(1+4\bar{k}_{j+1}^2 + \bar{\psi}_j'^2\left(\frac{4}{g^2} +\left(1+\frac{4\bar{k}_{j+1}}{g}\right)^2\right) + 2\bar{\psi}_j' \left(\frac{2}{g}+2\bar{k}_{j+1} \left(1+\frac{4\bar{k}_{j+1}}{g}\right)\right)\right)\bigg\vert\\
    &\quad+|\log (1+\psi_j'^2)+\log (1+\bar{\psi}_j'^2)|\\
    &\le C\theta_1(|k_{j+1}-\bar{k}_{j+1}|+|\psi'_j-\bar{\psi}'_j|) + C\theta_2|\psi'_j-\bar{\psi}'_j|\\
    &\le C\theta_1(|t_{j+1}-\bar{t}_{j+1}|+|\psi'_j-\bar{\psi}'_j|) + C\theta_2|\psi'_j-\bar{\psi}'_j|
\end{align*}
where $$\theta_1^{-1/2}=1+4k_{\min}^2+4k_{\min}^2\left(\frac{4}{g^2}+\left(1+\frac{4k_{\min}}{g}\right)^2\right) + 4k_{\min} \left(\frac{2}{g}+2k_{\min} \left(1+\frac{4k_{\min}}{g}\right)\right)>1$$ 
$$\theta_2^{-1/2}=1+4k_{\min}^2>1$$
It also follows from (\ref{unstablecone}) that 
\begin{align*}
    |\psi'_j-\bar{\psi}'_j| 
    &\le C|t_j-\bar{t}_j|+C\theta_3|\psi'_{j-1}-\bar{\psi}'_{j-1}|\\
    &\le C|t_j-\bar{t}_j|+C\theta_3|t_{j-1}-\bar{t}_{j-1}|+C\theta_3^2|\psi'_{j-2}-\bar{\psi}'_{j-2}|\\
    &\cdots\\
    &\le C|t_j-\bar{t}_j|+C\theta_3|t_{j-1}-\bar{t}_{j-1}|+\cdots+C\theta_3^{j-1}|t_1-\bar{t}_1|+C\theta_3^j|\psi'_0-\bar{\psi}'_0|\\
    &\le C\frac{|t_0-\bar{t}_0|}{\Lambda^n}+C\theta_3\frac{|t_0-\bar{t}_0|}{\Lambda^{j-1}}+\cdots+C\theta_3^{j-1}\frac{|t_0-\bar{t}_0|}{\Lambda}+C\theta_3^j|\psi'_0-\bar{\psi}'_0|\\
    &\le Cj\theta_4^j|t_0-\bar{t}_0|+C\theta_3^j|\psi'_0-\bar{\psi}'_0|
\end{align*}
where $\theta_3^{-1/2}=1+4k_{\min}/g>1$, $\theta_4=\max\{\theta_3,\Lambda^{-1}\}<1$ and $\Lambda$ is the expansion rate of unstable curves.\\
Consequently, 
\begin{align}\label{ach}
    |\log\mathcal{J}_{W_j}(x_j)-\log\mathcal{J}_{\bar{W}_j}(\bar{x}_j)|\le Cj\Theta^j|t_0-\bar{t}_0|+C\Theta^j|\psi'_0-\bar{\psi}'_0|
\end{align}
where $\Theta=\max\{\theta_1,\theta_2,\theta_3,\theta_4\}<1$.\\

Finally we are ready to estimate the Jacobian. 
Observe that $s_+(x,y)=s_+(\bar{x},\bar{y})$ since each pair $(x,\bar{x})$, $(y,\bar{y})$ is connected by its corresponding stable manifold.
    $$\quad\quad|\log\mathcal{J}h(x)-\log\mathcal{J}(y)| $$
    $$\le\sum_{j=0}^{\infty}|\log\mathcal{J}_{W_j}(x_j)-\log\mathcal{J}_{\bar{W}_j}(\bar{x}_j)-\log\mathcal{J}_{W_j}(y_j)+\log\mathcal{J}_{\bar{W}_j}(\bar{y}_j)|$$
    $$\le\sum_{j<s_+(x,y)}\left(|\log\mathcal{J}_{W_j}(x_j)-\log\mathcal{J}_{W_j}(y_j)|+|\log\mathcal{J}_{\bar{W}_j}(\bar{x}_j)-\log\mathcal{J}_{\bar{W}_j}(\bar{y}_j)|\right)$$
    $$\quad +\sum_{j\ge s_+(x,y)}\left(|\log\mathcal{J}_{W_j}(x_j)-\log\mathcal{J}_{\bar{W}_j}(\bar{x}_j)|+|\log\mathcal{J}_{W_j}(y_j)-\log\mathcal{J}_{\bar{W}_j}(\bar{y}_j)|\right)$$
    $$\le C\sum_{j<s_+(x,y)}(|x_j-\bar{x}_j|+|y_j-\bar{y}_j|)+C\sum_{j\ge s_+(x,y)}j\Theta^j(|x_0-\bar{x}_0|+|y_0-\bar{y}_0|)
    $$
    $$\quad+\Theta^j(|\psi'(x_0)-\bar{\psi}'(\bar{x}_0)|+|\psi'(y_0)-\bar{\psi}'(\bar{y}_0)|)$$
    $$ \le C\Lambda^{-s_+(x,y)}(|x_{s_+(x,y)}-\bar{x}_{s_+(x,y)}|+|y_{s_+(x,y)}-\bar{y}_{s_+(x,y)}|)
    $$
    $$
    \quad+\Theta^{s_+(x,y)}(|x_0-\bar{x}_0|+|y_0-\bar{y}_0|+|\psi'(x_0)-\bar{\psi}'(\bar{x}_0)|+|
    \psi'(y_0)-\bar{\psi}'(\bar{y}_0)|)
    \le C\Theta^{s_+(x,y)}
$$
where the sum of small indices $j<s_+(x,y)$ is controlled by the distortion estimate from Lemma \ref{distortion} and the sum of large indices $j\ge s_+(x,y)$ is controlled by (\ref{ach}).
\end{proof}

\section{Escaping and Bounded Orbits}
Theorem \ref{null} shows that the escaping orbits takes up a null set. However in this section we show that the escaping orbits do exist and so do the bounded orbits.\\

We introduce the notion of \emph{proper standard pair}. A \emph{standard pair} $(W,\mu_W)$ consists of an unstable curve $W$ and a \emph{regular} probability measure $\mu_W$ supported on $W$, i.e. $\mu_W$ is absolutely continuous and has a dynamically H\"older density. We say a standard pair is \emph{proper} if there exists a large constant $C_p$ bounding the following the quantity
$$\mathcal{Z}_W:=\sup_{\epsilon}\frac{\mu_W\{r_0<\epsilon\}}{\epsilon}.$$
It is easy to see that in our case $\mu_W$ is the normalised Lebesgue measure on the unstable curve and that $\mathcal{Z}_W=\frac{2}{|W|}$, so any unstable curve $W$ longer than $\delta_2$ endowed with Lebesgue measure is a proper standard pair. We also observe that $\delta_2$ can be made arbitrarily small by choosing $C_p$ large. Therefore by Theorem \ref{exp} and Lemma 2.2, 2.3 in \cite{dn}, we have the following central limit theorem for all proper standard pairs:

\begin{prop}\label{cltcurve}
There exists $\delta_2\gg1$ such that on any unstable curve $W$ with $|W|>\delta_2$ we have the following central limit theorem for dynamically H\"older observables, i.e. 
$$\frac{1}{\sqrt{n}}\sum_{i=0}^{n-1}\varphi\circ \tilde{F}_{\infty}^i \stackrel{dist}{\rightharpoonup}\mathcal{N}(0,\sigma_{\varphi}^2)$$
where $\varphi$ is dynamically H\"older with zero average $\int_{\mathbb{T}}\varphi d\tilde{\mu}=0$.
\end{prop}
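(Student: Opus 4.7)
The plan is to reduce Proposition \ref{cltcurve} to the abstract CLT for proper standard pairs stated as Lemmas 2.2--2.3 of \cite{dn}, after checking that our setup satisfies the hypotheses there. First I would fix the choice of $\delta_2$. The paragraph preceding the proposition already identifies $\mathcal{Z}_W=2/|W|$ when $W$ is equipped with its normalised Lebesgue measure, so once a properness constant $C_p$ is chosen, setting $\delta_2:=2/C_p$ guarantees that every unstable curve $W$ with $|W|>\delta_2$ is a proper standard pair $(W,\mu_W)$.

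Next I would verify the structural inputs required by \cite{dn}. These amount to (i) an exponential decay of correlations for dynamically H\"older observables against the invariant measure $\tilde\mu$, and (ii) the standard pair calculus -- distortion control, absolutely continuous holonomy, and geometric growth of short pieces -- controlling the pushforward of proper standard pairs under $\tilde F_\infty$. Item (i) is exactly Theorem \ref{exp}. For (ii) the ingredients are all in Section 6.2: uniform hyperbolicity with invariant cones and expansion rate $\Lambda$ coming from the proofs of Theorems \ref{poserg} and \ref{negerg}, bounded curvature of iterated unstable curves (see (\ref{boundedcurvature})), distortion control (Lemma \ref{distortion}), absolute continuity of the holonomy map with dynamically H\"older Jacobian (Lemma \ref{holonomy}), $N_0$-step expansion (Lemma \ref{nstep}), and the Growth Lemma (Lemma \ref{growthlemma}), which guarantees that the measure of points whose forward image lies in a short component of $\tilde F_\infty^n W$ decays geometrically in $n$.

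With these inputs, the argument of \cite{dn} decomposes the pushforward of $\mu_W$ under $\tilde F_\infty^n$ as a convex combination of measures supported on image sub-curves; the Growth Lemma ensures that, except on an exponentially small residual, the components sit on curves of length at least $\delta_2$ and are themselves proper standard pairs. One first upgrades Theorem \ref{exp} to uniform equidistribution, namely $\mathbb{E}_{\mu_W}[\varphi\circ\tilde F_\infty^n]\to\int\varphi\, d\tilde\mu$ at an exponential rate uniform over proper standard pairs, and then propagates this uniformity to the joint cumulants of the Birkhoff sum $\sum_{i=0}^{n-1}\varphi\circ\tilde F_\infty^i$. The CLT under $\mu_W$, with variance $\sigma_\varphi^2$, then follows by the method of cumulants. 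The principal technical obstacle -- uniform equidistribution of proper standard pairs -- is precisely the content of Lemmas 2.2--2.3 of \cite{dn} and requires no new argument here beyond (i) and (ii).
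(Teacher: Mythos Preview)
Your proposal is correct and follows exactly the paper's own route: identify $\delta_2$ via $\mathcal{Z}_W=2/|W|$ so that long unstable curves are proper standard pairs, then invoke Theorem \ref{exp} together with Lemmas 2.2--2.3 of \cite{dn}. The paper's justification is in fact terser than yours---it simply cites Theorem \ref{exp} and \cite{dn} without spelling out the standard-pair calculus ingredients (ii)---so your listing of the relevant lemmata (distortion, holonomy, growth) is a welcome expansion rather than a departure.
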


Now we prove Theorem \ref{espbdd}.
\begin{proof}[Proof of Theorem \ref{espbdd}]
Let us denote $(t_n^{\infty},v_n^{\infty})=F_{\infty}^n(t_0,v_0)$ and $(t_n,v_n)=F^n(t_0,v_0)$.\\
First we recall from Section 5 that the energy change $\gamma$ in the limit map $F_{\infty}$ on the cylinder has zero average. Moreover, $\gamma$ is dynamically H\"older as it is piecewise $C^1$ and its discontinuities are located exactly on $\mathcal{S}^+$. Therefore by Proposition \ref{cltcurve}, $\exists n_0,A$ such that for every unstable curve $W$ longer than $\delta_2$
\[
    \mathbb{P}_W\left(v_{n_0N_0}^{\infty}>v_0+A\sqrt{n_0N_0}\right)>\frac{1}{3}
\]
where $N_0$ is the constant from Lemma \ref{nstep} $N_0$-Step Expansion.\\
By Lemma \ref{growthlemma}, if $\delta_2$ is sufficiently small, then for sufficiently large $n_0$ $$\mathbb{P}_W(r_{n_0N_0}<4\delta_2)<\frac{1}{10}.$$
We know from Section 3.2 that the limit map $F_{\infty}$ well approximates the original collision map $F$ for large velocity with an error of order $\mathcal{O}(v_0^{-1})$ on each continuity component of $F_{\infty}^{n_0N_0}W$, thus we can choose $v_*\gg1$ so large that if $v_0>v_*$ everywhere on $W$, then we have $$\mathcal{P}_W(v_{n_0N_0}>v_0+A\sqrt{n_0N_0},r_{n_0N_0}>4\delta_2)>\frac{1}{4}.$$
By the estimate above, at least one component $W_1\subset F_{\infty}^{n_0N_0}W$ contains a segment $\bar{W}_1$ longer than $\delta_2$ and $v_{n_0N_0}>v_0+A\sqrt{n_0N_0}$ holds everywhere on $\bar{W}_1$. By repeating the argument on $\bar{W}_1$, we get another component $W_2\subset F_{\infty}^{n_0N_0}\bar{W}_1$ containing a segment $\bar{W}_2$ longer than $\delta_2$ and the velocity increases by another $A\sqrt{n_0N_0}$. Inductively, we construct an escaping orbit.\\
Similarly, we can construct an orbit whose velocity first increases by $A\sqrt{n_0N_0}$ and then decreases by $A\sqrt{n_0N_0}$ and etc. In that way we obtain an orbit whose energy remains in $[v_0-2A\sqrt{n_0N_0},v_0+2A\sqrt{n_0N_0}]$. This holds for arbitrarily large $v_0>v_*$, thus we have bounded orbits at arbitrarily high energy level.
\end{proof}

\section{Conclusions}
In this paper, we have studied a piecewise $C^3$-smooth Fermi-Ulam model in a constant potential field. The collision map $F$ is well approximated by the limit map $F_{\infty}$ for large velocities. $F_\infty$ covers a map $\tilde{F}_{\infty}$ on a torus. 
For admissible wall motions we proved
ergodicity, exponential decay of correlations and central limit theorem for dynamically H\"older observables.
When our assumptions fail, there are counterexamples in the class of analytic periodic platform motions by Pustylnikov \cite{Pu77} where KAM islands exist for the limit map and the original system possesses a positive measure set of escaping orbits.
The ergodic and statistical properties of the limit map $\tilde{F}_{\infty}$ established here in turn imply that the escaping set has zero measure and the typical behavior of the original collision dynamics $F$ is recurrent, but escaping and bounded orbits still exist at arbitrarily high energy level. 

It is also interesting to study long time evolution of 
 the energy distribution for typical high velocity trajectories,
 (cf. \cite{dc09, deSD} for similar results for other systems).
 Besides, we note our results do not fully address 
 the behavior of low energy orbits. The problem becomes subtle when we come to the low energy region as two consecutive collisions can happen in arbitrarily short time interval, causing the system to be nonuniformly hyperbolic. In the future, we hope to establish ergodicity for the the collision map $F$. This would imply in particular that the almost every orbit
is oscillatory, so that the energy eventually comes close to any given value.

\bibliography{FUpotential}
\bibliographystyle{plain}

\end{document}